\documentclass[11pt]{amsart}
\usepackage{amsthm,amsmath,amssymb}

\usepackage{geometry}
 \geometry{
 a4paper,
 total={170mm,240mm},
 left=20mm,
 top=20mm,
 }
\usepackage{graphicx,cite}
\usepackage{color}

\numberwithin{equation}{section}
\numberwithin{figure}{section}
\theoremstyle{plain}
\newtheorem{thm}{Theorem}[section]
\newtheorem{lem}[thm]{Lemma}

\newtheorem{claim}[thm]{Claim}

\theoremstyle{remark}
\newtheorem{rmk}[thm]{Remark}

\newcommand{\M}{\operatorname{M}}
\newcommand{\Hf}{\operatorname{\textbf{H}}}

\newcommand{\PP}{\operatorname{PP}}

\begin{document}

\title{A Shuffling Theorem  for Reflectively Symmetric Tilings}

\author{Tri Lai}
\address{Department of Mathematics, University of Nebraska -- Lincoln, Lincoln, NE 68588, U.S.A.}
\email{tlai3@unl.edu}
\thanks{This research was supported in part  by Simons Foundation Collaboration Grant (\# 585923).}


\subjclass[2010]{05A15,  05B45}

\keywords{perfect matchings, plane partitions, lozenge tilings, dual graph,  graphical condensation, symplectec character.}

\date{\today}

\dedicatory{}

\begin{abstract}
In arXiv:1905.08311, the author and Rohatgi proved a shuffling theorem for doubly--dented hexagons. In particular, we showed that shuffling removed unit triangles along a horizontal axis in a hexagon only changes the tiling number by a simple multiplicative factor. In this paper, we consider a similar phenomenon for a symmetry class of tilings, the reflectively symmetric tilings, of the doubly--dented hexagons. We also prove several shuffling theorems for halved hexagons. These theorems generalize a number of known results in the enumeration of halved hexagons.
\end{abstract}

\maketitle

\section{Introduction}

MacMahon's classical theorem  \cite{Mac} on plane partitions fitting in a given box is equivalent to the fact that the number of lozenge tilings of a centrally symmetric hexagon $Hex(a,b,c)$ of side-lengths $a,b,c,a,b,c$ (in a cyclic order, from the north side) is given by the simple product:
\begin{equation}\label{Maceq}
\PP(a,b,c):=\prod_{i=1}^{a}\prod_{j=1}^{b}\prod_{k=1}^{c}\frac{i+j+k-1}{i+j+k-2}.
\end{equation}
The $10$ symmetry classes of the plane partitions were introduced in the classical paper of R. Stanley \cite{Stanley}. Each of the symmetry classes corresponds to a certain kind of symmetric lozenge tilings of a hexagon. We refer the reader to e.g.\cite{Andrews, Kup, Stem, Krat3, Zeil} and the lists of references therein for more related work about symmetric plane partitions .

 R. Proctor  \cite{Proc} enumerated a certain class of staircase plane partitions that are in bijection with the lozenge tilings of a hexagon with a maximal staircase cut off (see Figure \ref{halfhex6}(a)).

\begin{thm}[Proctor  \cite{Proc}] \label{Proctiling}For any non-negative integers $a,$ $b$, and $c$ with $a\leq b$, the number of lozenge tiling of the centrally symmetric hexagon $Hex(a,b,c)$, in which a maximal stair case is cut off from the west corner, is given by
\begin{equation}
\prod_{i=1}^{a}\left[\prod_{j=1}^{b-a+1}\frac{c+i+j-1}{i+j-1}\prod_{j=b-a+2}^{b-a+i}\frac{2c+i+j-1}{i+j-1}\right],
\end{equation}
 where empty products are taken to be 1.
\end{thm}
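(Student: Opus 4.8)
The plan is to reduce the enumeration to a non-intersecting lattice path count and then to a determinant evaluation. First I would invoke the standard bijection between lozenge tilings and families of non-intersecting lattice paths: after shearing the triangular lattice to $\mathbb{Z}^2$, the tilings of Proctor's region $R$ --- the hexagon $Hex(a,b,c)$ with a maximal staircase of $a$ steps removed at the west corner --- correspond to families of $a$ non-intersecting paths with unit north/east steps whose $a$ right endpoints $v_1,\dots,v_a$ are rigidly determined by $b$ and $c$, while their left endpoints are forced by the removed staircase to occupy the $a$ corners of the staircase boundary. The essential structural point is that this staircase boundary acts as an impenetrable wall: no path can pass to the far side of it, since there is no region there to tile.

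With a wall present, the number of such path families is not a single Lindstr\"om--Gessel--Viennot determinant of binomials but, by the reflection principle in the Lindstr\"om--Gessel--Viennot lemma for paths confined to a half-space (cf.\ Lindstr\"om, Gessel--Viennot, Gessel--Zeilberger), a determinant $\det_{1\le i,j\le a}\bigl(p(u_i\to v_j)-p(\bar u_i\to v_j)\bigr)$, where $u_i$ is the virtual source placed symmetrically to $v_i$ about the wall and $\bar u_i$ is its mirror image; each entry is a difference of two binomial coefficients built from $a,b,c$. Equivalently --- and this gives cleaner bookkeeping --- unfolding the paths across the staircase line identifies tilings of $R$ with King-type symplectic tableaux of a rectangular shape whose dimensions are read off from $a$, $b-a$, and $c$, so that $\M(R)$ equals the principal specialization $s^{Sp}_{\lambda}(1,1,\dots,1)$ of a symplectic Schur function, i.e.\ an irreducible character of $Sp(2a)$ evaluated at the identity.

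It then remains to put this quantity in closed form, and this is where the real work lies. The Weyl dimension formula for $Sp(2a)$ writes $s^{Sp}_\lambda(1^a)$ as a ratio of products over the positive roots of the $C_a$ root system: the short roots $e_i\pm e_j$ produce the factors of the form $(c+i+j-1)/(i+j-1)$, while the long roots $2e_i$ produce exactly the doubled factors $2c+i+j-1$, and after collecting terms and splitting the range of $j$ at $b-a+1$ (a consequence of the rectangular-with-a-shift shape of $\lambda$) one recovers precisely the stated two-range product. The main obstacle is the careful matching of the partition $\lambda$ to the geometric parameters $(a,b,c)$ and the check that the Weyl ratio collapses to the claimed form with no spurious factors. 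If one prefers to bypass the root-system bookkeeping, the same determinant can be evaluated directly by Krattenthaler's ``identification of factors'' method --- pull the common factors out of each row, regard the reduced determinant as a polynomial in $c$ of controlled degree, locate enough linear factors by exhibiting explicit row dependences at the corresponding specializations of $c$, and pin down the leading coefficient --- which again yields the product; or one can instead run a Kuo graphical condensation recurrence on the dual graph of $R$, choosing four boundary vertices so that the five regions in Kuo's identity are all of Proctor's type with shifted parameters (some possibly degenerate), verifying that the conjectured product satisfies the same recurrence, and settling the base cases, in which case the obstacle migrates to the choice of those four vertices and the handling of the degenerate regions.
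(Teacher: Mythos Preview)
The paper does not prove Theorem~\ref{Proctiling}; it quotes it as background from Proctor's paper \cite{Proc} and uses it only as motivation, so there is no ``paper's own proof'' to compare against. Your outline is a correct sketch of several standard proofs of this classical result. The symplectic route you describe is in fact closest to Proctor's original argument: the tilings correspond to staircase plane partitions that are indexed by a representation of an (odd) symplectic group, and the Weyl dimension formula for type $C$ gives the product. The Lindstr\"om--Gessel--Viennot plus reflection approach and Krattenthaler's identification-of-factors evaluation of the resulting determinant are genuine alternative proofs appearing elsewhere in the literature, and a Kuo-condensation proof along the lines you mention is also viable (and is the technique the present paper uses for its own new results, though not for this theorem). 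No gap in your proposal, but be aware that in the context of this paper Theorem~\ref{Proctiling} is an imported result, not something to be re-proved.
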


we denote by $\mathcal{P}_{a,b,c}$ the region in the above theorem and use the notation $\M(R)$ for the number of lozenge tilings of the region $R$ on the triangular lattice. When $a=b$, the region $\mathcal{P}_{a,b,c}$ above can be viewed as  a half of a symmetric hexagon with a zigzag cut along the vertical symmetry axis. In this point of view, we usually call the region $\mathcal{P}_{a,b,c}$ a \emph{halved hexagon (with defects)}.  We also note that when $a=b$, Proctor's Theorem \ref{Proctiling} implies an exact enumeration for one of the ten symmetry classes of plane partitions, the \emph{transposed-complementary plane partitions}.

\begin{figure}
  \centering
  \includegraphics[width=10cm]{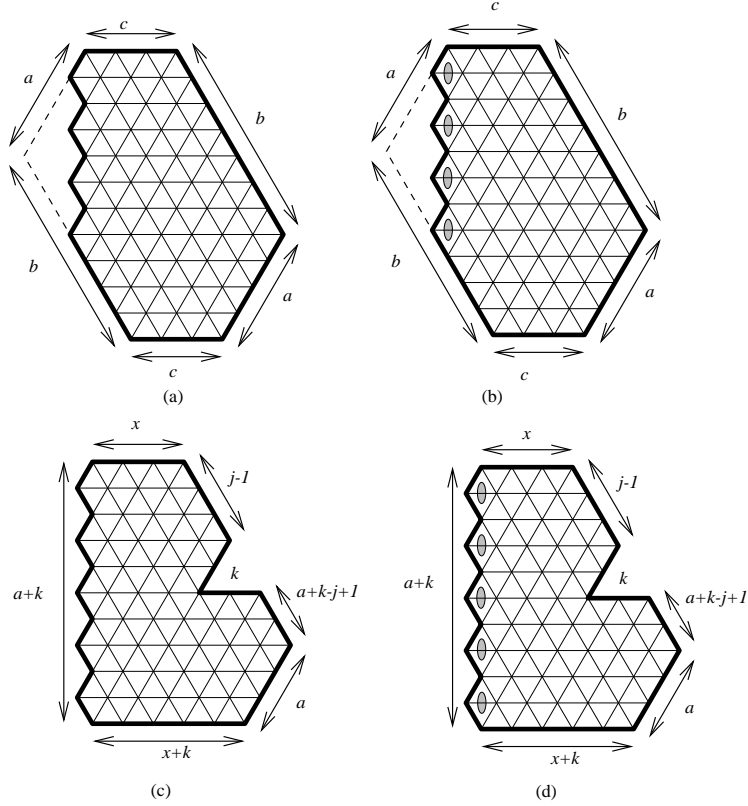}
  \caption{(a) The halved hexagon (with defects) $\mathcal{P}_{4,7,3}$. (b) The weighted halved hexagon $\mathcal{P}'_{4,7,3}$. (c) and (d) The regions in Rohatgi's paper \cite{Ranjan1}.}\label{halfhex6}
\end{figure}

Lozenges in a region can carry weights. In the weighted case, we use the notation $\M(R)$ for the sum of weights of all lozenge tilings of $R$, where the \emph{weight} of a lozenge tiling is the weight product of its constituent lozenges. We still call $\M(R)$ the \emph{(weighted) tiling number} of $R$. We consider the weighted counterpart $\mathcal{P}'_{a,b,c}$  of $\mathcal{P}_{a,b,c}$,  where all the vertical lozenges along the staircase cut are weighted by $\frac{1}{2}$ (see the vertical lozenges with shaded cores in Figure \ref{halfhex6}(b)).  In particular, each tiling of $\mathcal{P}'_{a,b,c}$ is weighted by $\frac{1}{2^n}$, where $n$ is the number of vertical lozenges running along the staircase cut. M. Ciucu \cite{Ciucu1} proved the following weighted version of Theorem \ref{Proctiling}.

\begin{thm} [Ciucu  \cite{Ciucu1}]\label{Ciucuhalfhex} For any non-negative integers $a,$ $b$, and $c$ with $a\leq b$
\begin{equation}
\M(\mathcal{P}'_{a,b,c})=2^{-a}\prod_{i=1}\frac{2c+b-a+i}{c+b-a+i}\prod_{i=1}^{a}\left[\prod_{j=1}^{b-a+1}\frac{c+i+j-1}{i+j-1}\prod_{j=b-a+2}^{b-a+i}\frac{2c+i+j-1}{i+j-1}\right].
\end{equation}
\end{thm}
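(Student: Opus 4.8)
The plan is to prove Theorem \ref{Ciucuhalfhex} by graphical condensation, inducting on the parameters $a$, $b$, $c$. Passing to the dual graph, $\M(\mathcal{P}'_{a,b,c})$ becomes the sum of weights of perfect matchings of a planar bipartite graph $G_{a,b,c}$ whose edges dual to the vertical lozenges along the staircase cut carry weight $\tfrac{1}{2}$. The crucial preliminary step is to isolate a family of regions, slightly larger than $\{\mathcal{P}'_{a,b,c}\}$ --- say weighted halved hexagons that additionally allow a bounded number of unit triangles to be deleted from the slanted boundary --- that is \emph{closed} under the local moves generated by Kuo condensation. Granting this, one picks four unit triangles $u,v,w,s$ on the boundary of $\mathcal{P}'_{a,b,c}$ so that deleting the five subsets $\{u,v\}$, $\{w,s\}$, $\{u,w\}$, $\{v,s\}$ and $\{u,v,w,s\}$ produces regions $R_0,R_1,\dots,R_4$ still in the family, with parameters ranging over $(a,b,c)$, $(a\pm 1,b,c)$, $(a,b\pm 1,c)$, $(a\pm1,b\pm1,c)$ and $c\mapsto c\pm1$. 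Kuo's theorem then gives a bilinear recurrence of the shape
\begin{equation*}
\M(\mathcal{P}'_{a,b,c})\,\M(R_0)=\M(R_1)\,\M(R_2)+2^{-m}\,\M(R_3)\,\M(R_4),
\end{equation*}
where $m$ is an explicit nonnegative integer coming from the half--weighted lozenges that are forced by the triangle deletions. One then finishes by verifying (i) that the candidate product $F(a,b,c)$ on the right--hand side of the theorem satisfies the very same recurrence --- a finite identity among products of linear forms, dispatched by cancellation --- and (ii) the base cases: $c=0$ (a unique tiling, $F=1$), $a=0$ (a halved hexagon with a straight cut, a short known product), and the diagonal $a=b$, the weighted transpose--complementary region, where $F$ is checked directly against MacMahon's formula \eqref{Maceq} and Proctor's Theorem \ref{Proctiling}.

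A cleaner, non--inductive alternative --- and essentially the original route --- uses Ciucu's factorization theorem for regions with a reflective symmetry. The observation is that $G_{a,b,c}$, \emph{with its $\tfrac{1}{2}$--weighted cut edges}, is exactly one of the two halves obtained when that theorem is applied to a symmetric region $H_{a,b,c}$ (a hexagon bearing a mirror--symmetric pair of maximal staircase cuts, or a hexagon with a single dent on its vertical axis), so that the theorem reads
\begin{equation*}
\M(H_{a,b,c})=2^{\,k}\,\M(\mathcal{P}'_{a,b,c})\,\M(\mathcal{P}'_{a',b',c'}),
\end{equation*}
where $k$ counts the vertices of $H_{a,b,c}$ lying on its symmetry axis and $(a',b',c')$ is a readily identified small shift of $(a,b,c)$ (equal to $(a,b,c)$ when the two halves coincide as weighted graphs, in which case one simply extracts a square root). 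One then computes $\M(H_{a,b,c})$ on its own --- by \eqref{Maceq} after removing the lozenges forced by the staircase cuts, or by a short Lindstr\"om--Gessel--Viennot determinant evaluation --- and unwinds the mild parameter shift by a one--line induction to get $F(a,b,c)$. A third, conceptual, reading is that $\M(\mathcal{P}'_{a,b,c})$ is, up to an explicit monomial prefactor, the symplectic Weyl character $\mathrm{sp}_{\lambda}$ of a rectangular--type partition $\lambda$ read off from $a,b,c$, evaluated at all arguments equal to $1$; the product in the theorem is then nothing but the type--$C$ Weyl dimension formula.

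The step I expect to be the real obstacle, in the condensation approach, is the bookkeeping concealed in ``closed family'' and ``explicit $2^{-m}$'': one must choose $u,v,w,s$ so that \emph{every} region in Kuo's identity stays in the family, and must track exactly which half--weighted lozenges become forced when triangles are removed, since an off--by--a--power--of--$2$ slip will destroy the recurrence for $F$. In the factorization--theorem approach the corresponding delicate points are pinning down the axis count $k$ and the precise parameters $(a',b',c')$ of the mirror half, together with confirming that $H_{a,b,c}$ really does reduce to a region whose tiling number is a known product after forced--lozenge removal. Once those are settled, what remains in either route is the routine verification that a product of linear factors obeys a linear recurrence.
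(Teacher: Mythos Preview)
The paper does not contain a proof of Theorem~\ref{Ciucuhalfhex}. This result is quoted from the literature---it is attributed to Ciucu \cite{Ciucu1} and stated only as background, alongside Proctor's Theorem~\ref{Proctiling}, to motivate the later shuffling theorems for halved hexagons. There is therefore no ``paper's own proof'' to compare your proposal against.

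That said, your outline is a reasonable sketch of how such a result is established. Your second route---Ciucu's factorization theorem applied to a reflectively symmetric region---is in fact the method used in the original reference \cite{Ciucu1}, and your description of its delicate points (tracking the axis count and the parameters of the mirror half) is accurate. The Kuo-condensation approach you describe would also work and is closer in spirit to the techniques the present paper uses for its own results (Theorems~\ref{halffactor1} and~\ref{halffactor2}); the symplectic-character reading you mention is also correct and is alluded to in the paper's concluding remarks. But none of these is carried out here: the theorem is simply cited.
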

We refer the reader to \cite{Cutoff} for a number of related tiling enumerations. It is worth noticing that R. Rohatgi \cite{Ranjan1} generalizes the regions $\mathcal{P}_{a,a,c}$ and $\mathcal{P}'_{a,a,c}$ to halved hexagons with a triangle removed from  the boundary.   The author latter generalized the tiling enumerations of halved hexagons by Proctor and by Rohatgi to halved hexagons in which one or two arrays of an arbitrary number of adjacent triangles have been removed from the boundary \cite{Halfhex1, Halfhex2, Halfhex3}.

MacMahon's tiling  formula (\ref{Maceq}) was generalized by Cohn, Larsen and Propp \cite[Proposition 2.1]{CLP} when they presented a correspondence between lozenge tilings of a semihexagon with unit triangles removed on the base and semi-strict Gelfand--Tsetlin patterns. In particular, the \emph{dented semihexagon} $T_{a,b}(s_1,s_2,\dots,s_a)$ is the region obtained from the upper half of the symmetric hexagon of side-lengths $b,a,a,b,a,a$ (in clockwise order, starting from the north side) by removing $a$ unit triangles along the base at the positions $s_1,s_2,\dotsc,s_a$ as they appear from left to right. The number of lozenge tilings of the dented semihexagon is given by
\begin{equation}\label{CLPeq}
\M(T_{a,b}(s_1,s_2,\dots,s_a))=\prod_{1\leq i<j \leq a}\frac{s_j-s_i}{j-i}.
\end{equation}
It is worth noticing that the author also proved simple product formulas for  several counterparts of Cohn--Larsen--Propp's theorem for the so-called `\emph{quartered hexagons}' in \cite{Quarter} (see the detailed statement in Lemma \ref{QAR} in the next section).

Recently, the author and Rohatgi proved a `\emph{shuffling theorem}' for lozenge tilings of a hybrid object between MacMahon's hexagon and Cohn--Larsen--Propp's, called \emph{doubly--dented hexagons} in \cite{shuffling}. A doubly--dented hexagon is a hexagon on the triangular lattice, like the hexagon in the case of MacMahon's theorem,  with an arbitrary set of unit triangles removed along a horizontal axis, like the `dents' in Cohn--Larsen--Propp's dented semihexagon (see Fig. \ref{multiplefernfig}).  In general, the tiling numbers of such regions are \emph{not} given by  simple product formula. However, we show that their tiling number only changes by a simple multiplicative factor when shuffling the positions of up- and down-pointing removed unit triangles. The main goal of this paper is to find a similar shuffling theorem for the reflectively symmetric tilings (i.e. the tilings which are invariant under refections over a vertical axis)  of the double--dented hexagons.

\medskip

For the completeness, we present in the next paragraph the shuffling theorem (Theorem 2.4 in \cite{shuffling}).

\medskip

Let $x,y,z,u,d$ be nonnegative integers, such that $u,d \leq n$. Consider a symmetric hexagon of side-lengths\footnote{From now on, we always list the side-lengths of a hexagon in the clockwise order from the north side.} $x+n-u,y+u,y+d,x+n-d,y+d,y+u$. We remove $n$ arbitrary unit triangles along the  horizontal lattice line $l$ that contains the west and the east vertices of the hexagon. Assume further that there are $u$ up-pointing removed unit triangles and $d$ down-pointing removed unit triangles. Denote $U=\{s_1,s_2,\dotsc,s_u\}$ and $D=\{t_1,t_2,\dots, t_d\}$ the sets of positions of the the up-pointing and down-pointing removed unit triangles (ordered from left to right), respectively (i.e., $U,D\subseteq [x+y+n]:=\{1,2,\dots,x+y+n\}$, $|U\cup D|=n$, and $U$ and $D$ are not necessarily disjoint). Assume that we have also a set of `barriers' at the positions in $B\subseteq [x+y+n]\setminus(U\cup D)$. Here a \emph{barrier} is a unit horizontal lattice interval that is not allowed to be contained in a lozenge of any tilings (see the red barriers in Fig.  \ref{multiplefernfig}; $B=\{6,13\}$ in this case). It means that we do not allow the appearance of vertical lozenges at the positions in $B$. Denote by $H_{x,y}(U;D;B)$ the hexagon with the above setup of removed unit triangles and barriers. We call it a \emph{doubly--dented hexagon (with barriers)}. We now allow to `shuffle' the positions of the up- and down-pointing unit triangles and `flip' those triangles (i.e. change their orientations from up-pointing to down-pointing, and vice versa) in the symmetric difference $U\Delta D$ to obtain new position sets $U'$ and $D'$, respectively. The following theorem shows that shuffling and flipping removed triangles only changes the tiling number  by a simple multiplicative factor. Moreover, the factor can be written in a similar form to Cohn--Larsen--Propp's formula (i.e. the product on the right-hand side of \ref{CLPeq}).

\begin{figure}\centering
\includegraphics[width=13cm]{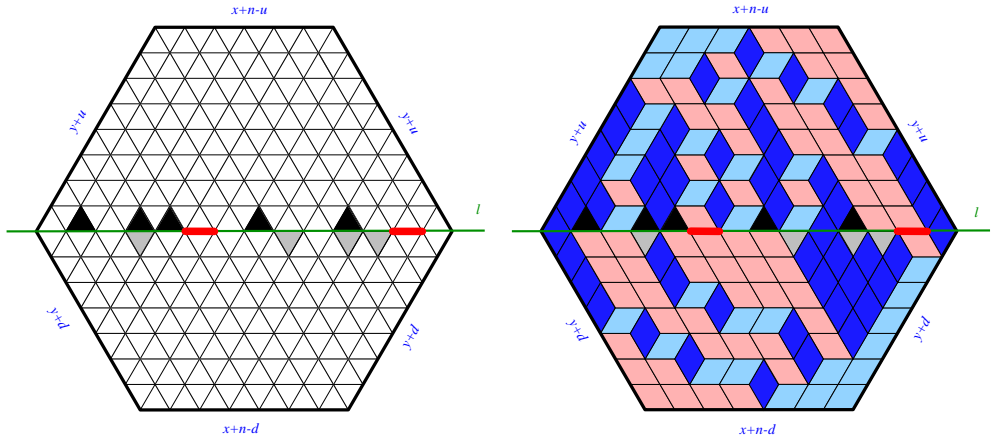}
\caption{The region $H_{4,3}(2,4,5,8,11;\ 4,9,11,12;\ 6,13)$ (left) and a lozenge tiling of its (right). The back and shaded triangles indicate the unit triangles removed; the bold horizontal bars indicate the barriers.}\label{multiplefernfig}
\end{figure}

\begin{thm}[Shuffling Theorem] For nonnegative integers $x,y,u,d,u',d',n$ ($u,d,u',d'\leq n$) and four ordered subsets $U=\{s_1,s_2,\dotsc,s_u\}$,  $D=\{t_1,t_2,\dots, t_d\}$, $U'=\{s'_1,s'_2,\dotsc,s'_{u'}\}$, and  $D'=\{t'_1,t'_2,\dots, t'_{d'}\}$ of $[x+y+n]$,  such that $U\cup D =U'\cup D'$ and $U\cap D =U'\cap D'$. Assume $B\subseteq[x+y+n]\setminus (U\cup D)$, such that $|B|\leq x$.
 We have
\begin{equation}\label{genmaineq}
  \frac{\M(H_{x,y}(U;D;B))}{\M(H_{x,y}(U';D';B))}= \frac{\displaystyle\prod_{1\leq i <j\leq u}\frac{s_j-s_i}{j-i}\displaystyle\prod_{1\leq i <j\leq d}\frac{t_j-t_i}{j-i}\PP(u,d,y)}{\displaystyle\prod_{1\leq i <j\leq u'}\frac{s'_j-s'_i}{j-i}\displaystyle\prod_{1\leq i <j\leq d'}\frac{t'_j-t'_i}{j-i}\PP(u'd',y)}.
\end{equation}
\end{thm}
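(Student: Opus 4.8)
The plan is to first reduce the theorem to a single \emph{elementary exchange}, and then settle that exchange by Kuo's graphical condensation together with an induction on the size of the region. Write
\[
F(U,D):=\prod_{1\le i<j\le u}\frac{s_j-s_i}{j-i}\ \prod_{1\le i<j\le d}\frac{t_j-t_i}{j-i}\ \PP(u,d,y),
\]
so that the right-hand side of \eqref{genmaineq} is exactly $F(U,D)/F(U',D')$. Then the theorem is equivalent to the assertion that $\M(H_{x,y}(U;D;B))/F(U,D)$ takes the same value for all admissible configurations $(U,D)$ sharing the same $x,y,B,U\cup D$ and $U\cap D$, i.e.\ that it is unchanged when the positions of $U\Delta D$ are redistributed between up- and down-pointing triangles. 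Since any two admissible configurations with the same union and intersection are joined by a chain of moves of the form $(U,D)\mapsto(U\cup\{q\},\,D\setminus\{q\})$ with $q\in D\setminus U$ (such a move preserves $U\cup D$ and $U\cap D$ while strictly decreasing $|D|$, so iterating it brings any configuration to the canonical one with $D\subseteq U$), it suffices to prove \eqref{genmaineq} in the case $U'=U\cup\{q\}$, $D'=D\setminus\{q\}$.

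For this elementary exchange I would induct on the total size of the hexagon (say on $x+y+n$), applying Kuo's graphical condensation to the planar bipartite dual graph $G$ of $H_{x,y}(U;D;B)$; recall that deleting a vertex of $G$ corresponds to removing a unit triangle from the region, and that a barrier corresponds to a deleted edge of $G$. The crux is to select four boundary unit triangles $\alpha,\beta,\gamma,\delta$ --- placed near the position $q$ and near a corner of the hexagon, with colors and cyclic order matching the hypotheses of Kuo's theorem --- so that in the condensation identity
\[
\M(G)\,\M(G-\{\alpha,\beta,\gamma,\delta\})=\M(G-\{\alpha,\beta\})\,\M(G-\{\gamma,\delta\})+\M(G-\{\alpha,\delta\})\,\M(G-\{\beta,\gamma\})
\]
every region appearing is again a doubly--dented hexagon with barriers from the same family, with side-lengths and position sets that are small perturbations of the original data --- in particular with $q$ appearing, according to the term, as an up-triangle, as a down-triangle, or as a full removed rhombus together with a new barrier. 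Each of the five satellite regions must still satisfy the bound $|B'|\le x'$, which is precisely why the barrier set has to be carried along throughout the induction even if one ultimately only cares about $B=\emptyset$.

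The condensation identity then becomes a recurrence for $\M$, and the last step is to verify that $F$ satisfies the \emph{same} recurrence. This reduces to a finite manipulation of products: $\PP(u,d,y)$ obeys a two-term ``box'' recursion coming from \eqref{Maceq}, and the ratios of Cohn--Larsen--Propp products $\prod\frac{s_j-s_i}{j-i}$ created by inserting or deleting the single position $q$ telescope into exactly the linear factors needed to match it. The induction bottoms out at genuine hexagons without dents (MacMahon's formula \eqref{Maceq}) and at dented-semihexagon degeneracies (Cohn--Larsen--Propp's formula \eqref{CLPeq}); here the hypothesis $|B|\le x$ ensures that these base regions are tileable and that these are the correct closed forms.

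I expect the main obstacle to be the geometric bookkeeping in the condensation step: one must choose $\alpha,\beta,\gamma,\delta$ so that all five satellite regions genuinely lie in the family $H_{x',y'}(U';D';B')$ with correctly shifted parameters, handle the degenerate cases where a chosen triangle collides with an existing dent or barrier or where the hexagon collapses, and check that the bound $|B'|\le x'$ persists in every satellite. One must also pick the induction statistic (e.g.\ $x+y+n$, or $|D\setminus U|$ refined by total size) so that each satellite is strictly smaller and the recursion terminates. Once the right configuration of the four triangles is in hand, matching the two recurrences is routine algebra.
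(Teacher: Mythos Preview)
This particular theorem is quoted from \cite{shuffling} and is not proved in the present paper. What the paper does prove in detail are the parallel Theorems~\ref{halffactor1} and~\ref{halffactor2} for halved hexagons, and the method there (which visibly mirrors that of \cite{shuffling}) differs from your outline in a way worth noting.

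The paper does \emph{not} reduce to an elementary exchange at a chosen position $q$. It establishes the identity for arbitrary admissible pairs $(U,D)$ and $(U',D')$ in a single induction on $x+y$. The unbalanced form of Kuo condensation (Lemma~\ref{kuothm}, with $|V_1|=|V_2|+1$) is applied to the dual graph of a region enlarged by two rows along the top; the four distinguished unit triangles are the upper-right and lower-right corner triangles together with two up-pointing triangles on the axis $l$ at the \emph{first and last} positions $\alpha<\beta$ of $(U\cup D\cup B)^c$. After stripping forced lozenges, the resulting recurrence (the analogue of \eqref{recurrence1}) relates the region with data $(U;D;B)$ to five regions with data $(U\cup S;D;B)$, $S\subseteq\{\alpha,\beta\}$, and with $x,y$ each decreased by $0$ or $1$; it adjoins positions only to $U$ and never touches $D$. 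Because $\alpha$ and $\beta$ depend only on $U\cup D\cup B=U'\cup D'\cup B$, the \emph{identical} recurrence holds with $(U',D')$ in place of $(U,D)$. Verifying that the right-hand side of \eqref{genmaineq} satisfies the same recurrence then reduces to a one-line telescoping: the ratio $\prod_i(\alpha-s_i)\prod_i(\beta-s_i)\big/\bigl((\beta-\alpha)\prod_i(\alpha-s_i)\prod_i(\beta-s_i)\bigr)$ is independent of $U$. The base cases are $y=0$ and $x=|B|$, each handled by the Region-Splitting Lemma together with the Cohn--Larsen--Propp formula.

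What this buys over your route: no distinguished position $q$, no need to engineer satellites in which $q$ appears with both orientations, no change in $u$ or $d$ across the two sides of the identity, and no separate chain-of-exchanges argument. Your plan may be workable, but the placement ``near $q$'' is precisely the part you leave unspecified, and since $q$ lies on the interior axis $l$ you would have to rely on the surrounding dents and barriers to furnish the face of $G$ on which Kuo's four vertices sit; the paper's choice of the extremal free positions $\alpha,\beta$ on $l$ together with two genuine corner triangles sidesteps that bookkeeping entirely.
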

An interesting aspect of the result is that the expression on the right-hand side does \emph{not} depend on the barrier set. Strictly speaking the above shuffling theorem only for symmetric hexagons, however, as mentioned in \cite[Remark 2.5]{shuffling}, this theorem implies a shuffling theorem for the general hexagon with unit triangle removed along an arbitrary horizontal axis.

\begin{figure}\centering
\includegraphics[width=13cm]{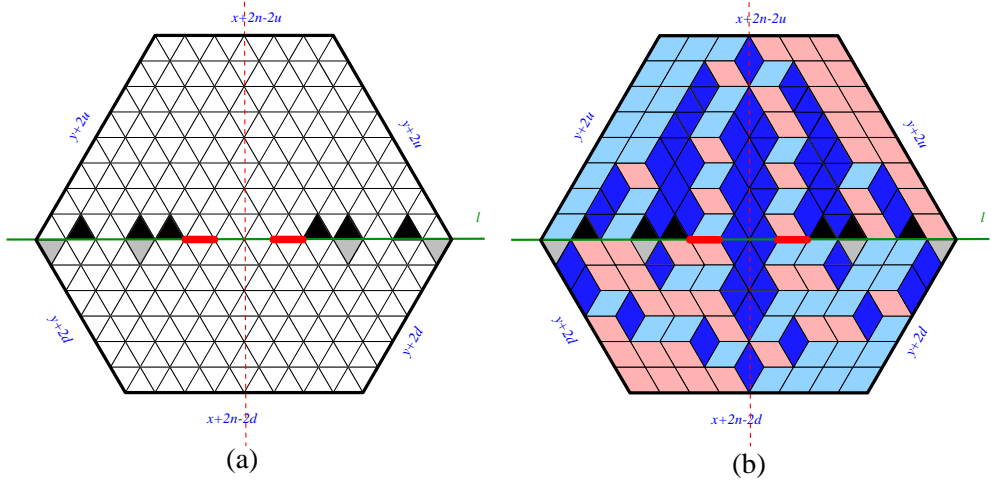}
\caption{(a) The region $RS_{4,2}(2,4,5;\ 1,4;\ 6)$ and (b) a reflectively symmetric tiling of its.}\label{Reflectdent}
\end{figure}

We now consider a reflectively symmetric doubly--dented hexagon as follows.  Let $U,D,B$ be three subsets of $\left\{1,2,\dots,\left\lceil \frac{x+y+2n}{2}\right\rceil\right \}$, such that $B\cap (U\cup D)=\emptyset$.  We denote by $RS_{x,y}(U;D;B)$ the symmetric doubly--dented hexagon
\[H_{x,y}(U\cup ((x+y+2n+1)-U);\  D\cup ((x+y+2n+1)-D);\ B\cup ((x+y+2n+1)-B)),\]
where, for an index set $S$ and a number $x$,  we denote $x-S:=\{x-s: s\in S\}$.
See Figure \ref{Reflectdent} for an example of the $RS$-type regions and reflectively symmetric tilings of them. It is easy to see that $x$ has to be even in order for the region to have reflectively symmetric tilings.

\begin{thm}[Shuffling Theorem for Reflectively Symmetric Tilings] \label{reflectfactor} For nonnegative integers $x,y,u,d,u',d',n$ ($x$ is even, $u,d,u',d'\leq n$) and five ordered subsets $U=\{s_1,s_2,\dotsc,s_u\}$,  $D=\{t_1,t_2,\dots, t_d\}$, $U'=\{s'_1,s'_2,\dotsc,s'_{u'}\}$, and  $D'=\{t'_1,t'_2,\dots, t'_{d'}\}$, and $B=\{k_1,\dots,k_b\}$ of $\left\{ 1,2,\dots,\left\lceil \frac{x+y+2n+1}{2}\right\rceil\right\}$,  such that $U\cup D =U'\cup D'$, $U\cap D =U'\cap D'$, $B\cap (U\cup D)=\emptyset$ and $b=|B|\leq x/2$.

(a) If $y$ is odd, then
\begin{align}\label{genmaineq2}
  &\frac{\M_r(RS_{x,y}(U;D;B))}{\M_r(RS_{x,y}(U';D';B))}= \frac{\displaystyle\prod_{1\leq i <j\leq u}(s^2_j-s^2_i)\displaystyle\prod_{1\leq i <j\leq d}(t^2_j-t^2_i)}{\displaystyle\prod_{1\leq i <j\leq u'}(s'^2_j-s'^2_i)\displaystyle\prod_{1\leq i <j\leq d'}(t'^2_j-t'^2_i)}  \frac{\Hf_2(2u'+y)\Hf_2(2d'+y)}{\Hf_2(2u+y)\Hf_2(2d+y)},
\end{align}

(b) If $y$ is even, then
\begin{align}\label{genmaineq3}
  \frac{\M_r(RS_{x,y}(U;D;B))}{\M_r(RS_{x,y}(U';D';B))}= &\frac{\displaystyle\prod_{1\leq i <j\leq u}(s_j-s_i)(s_j+s_i-1)}{\displaystyle\prod_{1\leq i <j\leq u'}(s'_j-s'_i)(s'_j+s'_i-1)}\notag\\
  &\qquad\times\frac{\displaystyle\prod_{1\leq i <j\leq d}(t_j-t_i)(t_j+t_i-1)}{\displaystyle\prod_{1\leq i <j\leq d'}(t'_j-t'_i)(t'_j+t'_i-1)}\frac{\Hf_2(2u'+y)\Hf_2(2d'+y)}{\Hf_2(2u+y)\Hf_2(2d+y)},
\end{align}
where we use the notation $\M_r(R)$ for the number of reflectively symmetric tilings of the region $R$, and where the ``\emph{skipping hyperfactorial}'' $\Hf_2(n)$ is defines as $\Hf_2(2k)=0!2!4!\cdots(2k-2)!$ and $\Hf_2(2k+1)=1!3!5!\cdots(2k-1)!$.
\end{thm}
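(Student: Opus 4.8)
The plan is to reduce the enumeration of reflectively symmetric tilings of $RS_{x,y}(U;D;B)$ to the (ordinary) tiling enumeration of a halved region obtained by cutting the symmetric doubly--dented hexagon along its vertical symmetry axis, and then to apply the Shuffling Theorem in that halved setting. First I would invoke the standard correspondence between reflectively symmetric tilings of a vertically symmetric region and lozenge tilings of ``half'' of it: a tiling invariant under the reflection is determined by its restriction to the right half, and the lozenges straddling the axis become forced vertical lozenges (weighted by $\tfrac12$ in the usual normalization, or handled by the factor-of-$2$ bookkeeping as in Theorem \ref{Ciucuhalfhex}). The parity of $y$ dictates whether the axis passes through a lattice line of horizontal unit segments or through a line of vertices, which is exactly why the two cases (a) and (b) look different: when $y$ is odd the cut yields a halved hexagon whose base dents are mirrored pairs, producing the ``squared'' differences $s_j^2-s_i^2$, whereas when $y$ is even the cut produces a halved hexagon with a zigzag boundary of the Proctor/Ciucu type, producing the factors $(s_j-s_i)(s_j+s_i-1)$.

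Next, having identified $\M_r(RS_{x,y}(U;D;B))$ with $\M$ of an explicit halved doubly--dented hexagon $\mathcal{H}$ carrying the removed triangles at positions $U$, $D$ on the base and barriers at $B$, I would prove a \emph{halved} shuffling theorem: the ratio $\M(\mathcal{H}(U;D;B))/\M(\mathcal{H}(U';D';B))$ is independent of $B$ and equals the claimed product. The natural tool here is Kuo's graphical condensation (the same engine behind the proof of the Shuffling Theorem in \cite{shuffling}), now applied to the dual graph of the halved region; one sets up a recurrence in the number of barriers or in the positions of the dents, checks it is satisfied by both the left-hand side and the conjectured right-hand side, and closes the induction using a base case with $|B|=0$. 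For the base case $B=\emptyset$ one can appeal directly to the known product formulas: Proctor's Theorem \ref{Proctiling}, Ciucu's Theorem \ref{Ciucuhalfhex}, and the quartered-hexagon formulas of Lemma \ref{QAR}, which supply closed forms for $\M(\mathcal{H}(U;D;\emptyset))$ in terms of the skipping hyperfactorials $\Hf_2$; taking ratios then yields (\ref{genmaineq2}) and (\ref{genmaineq3}) directly, since the $B$-free halved region with all dents at one parity is precisely a quartered/halved hexagon with defects.

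To execute the induction cleanly I would, as in \cite{shuffling}, not shuffle all of $U\Delta D$ at once but move a single unit triangle at a time: it suffices to prove the ratio formula when $U'$ and $D'$ differ from $U$ and $D$ by flipping one triangle from up-pointing to down-pointing (or sliding one dent by one unit), since a general shuffle is a composition of such moves and the right-hand sides of (\ref{genmaineq2})--(\ref{genmaineq3}) telescope correctly under composition. Each elementary move is then handled by one application of Kuo condensation on the halved dual graph, with the four ``corner'' vertices chosen near the base so that three of the four resulting regions are again halved doubly--dented hexagons (with one more or one fewer dent, or a shifted dent) and the fourth degenerates or factors off a MacMahon-type box; matching this identity against the corresponding three-term relation among the conjectured products is a finite computation with the $\Hf_2$'s and the Vandermonde-like factors.

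The main obstacle I anticipate is the careful setup of the halved region and its weighting in the two parity cases — in particular, making sure the barriers, which sit in the interior of the symmetric region, are correctly transferred to the halved region (a barrier on the axis behaves differently from a barrier off the axis, and for $y$ even a barrier can coincide with the zigzag cut), and verifying that the condensation recurrence does not ``see'' the barriers, so that the $B$-independence claimed in the theorem genuinely holds. A secondary technical point is checking that the hypothesis $b=|B|\le x/2$ is exactly what guarantees the halved region is non-degenerate (enough ``room'' for the barriers), mirroring the role of $|B|\le x$ in the original Shuffling Theorem; once these structural points are pinned down, the algebraic verification against $\Hf_2$ is routine.
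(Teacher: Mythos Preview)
Your first move --- cutting $RS_{x,y}(U;D;B)$ along its vertical symmetry axis and identifying reflectively symmetric tilings with ordinary tilings of one half --- is exactly what the paper does, and the parity-of-$y$ split is correctly anticipated. In the paper this bijection lands precisely on the halved doubly--dented hexagons $F_{x,(y-1)/2}(U;D;B)$ (for $y$ odd) and $\overline{F}_{x,y/2}(U;D;B)$ (for $y$ even), after removing forced lozenges; the proof of Theorem~\ref{reflectfactor} is then a one-line citation of Theorem~\ref{halffactor1}. One small correction: the lozenges along the symmetry axis are \emph{forced} vertical lozenges in any reflectively symmetric tiling, not $\tfrac12$-weighted ones --- no Ciucu-style weighting enters here.

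Where your plan diverges, and where it has a genuine gap, is in the proof of the halved shuffling theorem itself. You propose to induct on $|B|$ with base case $B=\emptyset$, appealing to ``known product formulas'' for $\M(\mathcal{H}(U;D;\emptyset))$. But $F_{x,y}(U;D;\emptyset)$ with generic $U,D$ and $y>0$ does \emph{not} have a closed product formula --- the paper says so explicitly after Theorem~\ref{halffactor2} --- so your base case is as hard as the full statement. The paper's induction is on $x+y$, with base cases $y=0$ and $x=b$: in each of those the region genuinely splits (via the Region-Splitting Lemma~\ref{RS}) along the horizontal axis $l$ into two quartered hexagons to which Lemma~\ref{QAR} applies. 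The inductive step uses a single Kuo condensation (Lemma~\ref{kuothm}) with two of the four distinguished unit triangles placed on $l$ at the extreme available positions $\alpha,\beta\in(U\cup D\cup B)^c$; this produces a recurrence decreasing $x$ and $y$, while the barrier set $B$ stays fixed throughout --- which is also why $B$-independence of the ratio falls out automatically, rather than needing a separate induction on $|B|$.
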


An interesting aspect of the main result is that the tiling numbers on the left-hand side of each of identities (\ref{genmaineq2}) and (\ref{genmaineq3}) are \emph{not} given by simple product formulas in general. In other words, all large prime factors in the prime factorizations of the tiling numbers on the left-had side cancel out.

The rest of the paper is organized as follows. In Section 2, we present several fundamental results in enumerations of tilings and certain version of Kuo condensation \cite{Kuo} that will be employed in our proofs. Section 3 is devoted to shuffling theorems of four families of halved hexagons.
In Section 4, we present several asymptotic enumerations that are implied from the shuffling theorems in Section 3. Section 4  is devoted to several other applications of  the results in Section 3 in enumerating halved hexagons with arrays of triangles removed. The proofs of our main theorems will be presented in Section 5. We conclude the paper by a remark about the connection between to symplectic characters and interesting geometric interpretations of our shuffling theorems in Section 6.

\section{Kuo Condensation and other preliminary results}
A \emph{forced lozenge} in a region $R$ is a lozenge that appears in any tilings of $R$. If we removed forced lozenges $l_1,l_2,\dots,l_k$ from the region $R$ and obtain a new region $R'$, then the tiling number is changed by the reciprocal of weighted product of the forced lozenges, i.e.
\[\M(R')=\left(\prod_{i=1}^{k} wt(l_i)\right)^{-1} \M(R),\]
where $wt(l_i)$ denotes the weight of the lozenge $l_i$.

If a region admits a lozenge tiling, then it must have the same number of up-pointing and down-pointing unit triangles. We call such a region a \emph{balanced} region. The following lemma allows us to decompose a big region into smaller regions when enumerating tilings.

\begin{lem}[Region-splitting Lemma \cite{Tri1,Tri2}]\label{RS}
Let $R$ be a balanced region on the triangular lattice. Assume that a balanced sub-region $Q$ of $R$ satisfies the condition that the unit triangles in $Q$ that are adjacent to some unit triangle of $R-Q$ have the same orientation.
Then $\M(R)=\M(Q)\, \M(R-Q).$
\end{lem}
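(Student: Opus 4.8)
The plan is to prove the Region-splitting Lemma by showing that the hypotheses force every lozenge tiling of $R$ to respect the partition of $R$ into $Q$ and $R-Q$, so that tilings of $R$ are in weight-preserving bijection with pairs consisting of a tiling of $Q$ and a tiling of $R-Q$. The key observation is that no lozenge of any tiling of $R$ can straddle the boundary between $Q$ and $R-Q$. Indeed, a lozenge is the union of one up-pointing and one down-pointing unit triangle sharing an edge. If such a lozenge had one triangle in $Q$ and the other in $R-Q$, then the triangle lying in $Q$ would be adjacent to a triangle of $R-Q$, hence by hypothesis it has a fixed orientation (say all such boundary-adjacent triangles of $Q$ are up-pointing). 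But then the partner triangle in $R-Q$ sharing its edge would have to be down-pointing, which is fine geometrically; the real point is that \emph{all} boundary triangles of $Q$ have the \emph{same} orientation, so no two adjacent triangles of opposite orientation can span the cut.

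First I would make this precise. Let me say the common orientation of the $Q$-side boundary triangles is up-pointing (the down-pointing case is symmetric). A straddling lozenge consists of an up-pointing and a down-pointing triangle glued along a shared edge; one of these lies in $Q$ and the other in $R-Q$. If the $Q$-triangle is the up-pointing one, its partner is down-pointing and lies in $R-Q$. For this to occur, the up-pointing $Q$-triangle must be adjacent to a $(R-Q)$-triangle, which is consistent with the hypothesis. So I must argue more carefully: consider instead the down-pointing triangle of the straddling lozenge. If it lies in $Q$, it is a boundary triangle of $Q$ adjacent to $R-Q$, contradicting that all such triangles are up-pointing. If it lies in $R-Q$, then the up-pointing triangle lies in $Q$; but since $Q$ is balanced and each up-pointing boundary triangle of $Q$ would be forced to pair \emph{outside} $Q$, a counting argument shows this cannot happen for all of them simultaneously. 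The cleaner route is a counting argument: every up-pointing boundary triangle of $Q$ that is matched across the cut removes one up-triangle from $Q$'s internal supply while leaving its matched down-triangle in $R-Q$, unbalancing both $Q$ and $R-Q$, which is impossible since both are balanced.

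The main technical step, which I expect to be the crux, is the balance/counting argument establishing that the net flux of matched triangles across the cut is zero, and then that zero flux combined with the uniform-orientation hypothesis forces \emph{no} flux. Concretely, let $R$ have a tiling; restrict attention to lozenges straddling the boundary. Each such lozenge pairs an up-triangle on one side with a down-triangle on the other. Because every $Q$-boundary triangle adjacent to $R-Q$ is up-pointing, any straddling lozenge must pair a $Q$-side up-triangle with an $R-Q$-side down-triangle. Counting up- versus down-triangles: since both $Q$ and $R-Q$ are balanced, the number of up-triangles in $Q$ equals the number of down-triangles in $Q$; a straddling lozenge consumes a $Q$ up-triangle but no $Q$ down-triangle, so if there were $m>0$ straddling lozenges then the tiling would have to match $m$ down-triangles of $Q$ using $m$ up-triangles imported from $R-Q$, again via straddling lozenges — but those would require $R-Q$-side up-triangles adjacent to $Q$, i.e. $Q$-side down-triangles adjacent to $R-Q$, contradicting the uniform orientation. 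Hence $m=0$.

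Finally, once no lozenge straddles the cut, every tiling of $R$ decomposes uniquely as a tiling of $Q$ together with a tiling of $R-Q$, and conversely any such pair glues to a tiling of $R$; since the weight of a tiling is the product of the weights of its lozenges, this bijection is weight-preserving, giving $\M(R)=\M(Q)\,\M(R-Q)$. I would present the forward map (restriction of a tiling to $Q$ and to $R-Q$) and the inverse map (disjoint union of a tiling of $Q$ with one of $R-Q$) and note both are well defined precisely because of the no-straddling property, concluding the multiplicativity of the tiling number.
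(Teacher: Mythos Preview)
The paper does not prove this lemma at all; it is quoted as a preliminary result from \cite{Tri1,Tri2} and used without proof. So there is no ``paper's own proof'' to compare against.

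Your argument is correct in substance, though the exposition is roundabout. The essential point, which you eventually reach, is this: assume WLOG all $Q$-boundary triangles adjacent to $R-Q$ are up-pointing. Any straddling lozenge then pairs an up-triangle of $Q$ with a down-triangle of $R-Q$. If there were $m>0$ such lozenges, the remaining unit triangles of $Q$ would have $m$ more down-pointing than up-pointing triangles (since $Q$ is balanced), and these excess down-triangles would have to be matched across the cut---but that would require down-pointing $Q$-boundary triangles, a contradiction. Hence $m=0$, no lozenge straddles the cut, and the bijection between tilings of $R$ and pairs (tiling of $Q$, tiling of $R-Q$) follows, preserving weights multiplicatively.

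Your first two paragraphs are unnecessary detours: you begin a direct argument, notice it does not immediately close, and only then pivot to the counting argument that actually works. In a polished write-up you should go straight to the balance count and drop the false start.
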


A \emph{perfect matching} (or simply \emph{matching} in this paper) of a graph is a collection of disjoint edges that covers all vertices of the graph.  A \emph{(planar) dual graph} of a region $R$ on the triangular lattice is the graph whose vertices are unit triangles in $R$ and whose edges connect precisely those two unit triangles sharing an edge. In the weighted case, the edges of the dual graph inherit the weights of the corresponding lozenges of the region.  The lozenge tilings of a region $R$ are in bijection with the matchings of its dual graph. In the view of this, we use the notation $\M(G)$ for weighted sum of matchings in $G$, where the weight of a matching of $G$ is the product of weights of its constituent edges. When $G$ is unweighted, i.e. all edges of $G$ have weight $1$, then $\M(G)$ counts matchings of $G$.

We will employ the following  version of `\emph{Kuo condensation}' introduced by Eric H. Kuo. We refer the reader to \cite{Kuo} for the other versions of Kuo condensation.
\begin{lem}[Kuo Condensation \cite{Kuo}]\label{kuothm}
Let $G=(V_1,V_2,E)$ be a (weighted) planar bipartite graph with the two vertex classes $V_1$ and $V_2$ such that $|V_1|=|V_2|+1$. Assume that $u,v,w,s$ are four vertices appearing in a cyclic order around a face of $G$ such that $u,v,w\in V_1$ and $s\in V_2$. Then
\begin{align}
\M(G-\{v\})&\M(G-\{u,w,s\})=\M(G-\{u\})\M(G-\{v,w,s\})+\M(G-\{w\})\M(G-\{u,v,s\}).
\end{align}
\end{lem}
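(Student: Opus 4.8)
The plan is to prove the identity by the classical \emph{superposition of matchings} argument. First I would observe that each of the three products counts, with weights, certain pairs of perfect matchings: a term of $\M(G-\{v\})\,\M(G-\{u,w,s\})$ is a pair $(M_1,M_2)$, where $M_1$ is a matching of $G-\{v\}$ and $M_2$ is a matching of $G-\{u,w,s\}$, contributing weight $\wt(M_1)\wt(M_2)$; and similarly for the two products on the right. The cardinality hypothesis $|V_1|=|V_2|+1$ makes each of the six vertex-deleted graphs balanced, so these matchings can indeed exist. I would then form the superposition $M_1\cup M_2$ as a multiset of edges and check degrees vertex by vertex: every vertex of $G$ has degree $0$ or $2$ in $M_1\cup M_2$, \emph{except} for the four distinguished vertices $u,v,w,s$, each of which has degree $1$. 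Hence the symmetric difference $M_1\triangle M_2$ is a disjoint union of cycles together with exactly two vertex-disjoint paths whose four endpoints are precisely $u,v,w,s$.

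Next I would determine how the two paths can pair up these endpoints. Since $G$ is planar and $u,v,w,s$ occur in this cyclic order around a single face $F$, the two disjoint paths cannot cross; a Jordan-curve argument applied to $F$ therefore excludes the ``crossing'' pairing $\{u,w\},\{v,s\}$, leaving only $\{u,v\},\{w,s\}$ (type $A$) or $\{v,w\},\{u,s\}$ (type $B$). A second, finer constraint comes from the bipartition: a path joining two vertices of $V_1$ has even length, so its two end-edges lie in different matchings, whereas a path from $V_1$ to $V_2$ has odd length, so its end-edges lie in the same matching. Recording for each product which of the two matchings supplies the edge at each of $u,v,w,s$ (this is dictated by which of the two deleted-vertex graphs actually contains that vertex), the parity constraint shows that the pairs counted by $\M(G-\{u\})\,\M(G-\{v,w,s\})$ are forced to be type $A$, while those counted by $\M(G-\{w\})\,\M(G-\{u,v,s\})$ are forced to be type $B$; the pairs for the left-hand product may be of either type.

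Finally I would produce the weight-preserving bijections that close the argument. Given a type-$A$ pair $(M_1,M_2)$ for the left-hand side, I would toggle the matching-membership of every edge along the path joining $u$ and $v$ (moving its $M_1$-edges into the second matching and its $M_2$-edges into the first) and leave all other edges fixed. Inspecting the four endpoints shows the resulting pair $(M_1',M_2')$ is a matching of $G-\{u\}$ together with a matching of $G-\{v,w,s\}$, i.e. a type-$A$ pair for $\M(G-\{u\})\,\M(G-\{v,w,s\})$; since $M_1'\cup M_2'=M_1\cup M_2$ as multisets the weight is unchanged, and since the toggle is an involution this is a bijection. An entirely parallel toggle along the path joining $v$ and $w$ matches the type-$B$ pairs on the left with all the pairs for $\M(G-\{w\})\,\M(G-\{u,v,s\})$. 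Splitting the left-hand product as (type $A$)$\,+\,$(type $B$) and applying the two bijections yields $\M(G-\{u\})\M(G-\{v,w,s\})+\M(G-\{w\})\M(G-\{u,v,s\})$, which is the claimed identity. I expect the main obstacle to be the non-crossing step: making the cyclic-order hypothesis do its work rigorously via the Jordan-curve theorem so that precisely the crossing pairing is excluded, and dovetailing this with the bipartite parity bookkeeping so that each right-hand product is pinned to a single pairing type.
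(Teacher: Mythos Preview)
The paper does not give a proof of this lemma; it is quoted verbatim from Kuo's paper \cite{Kuo} and used as a black box in the inductive step. Your superposition-of-matchings argument is precisely Kuo's original proof, and the details you outline are correct: the balance check for all six deleted-vertex graphs, the decomposition of $M_1\cup M_2$ into cycles and two paths with endpoints $\{u,v,w,s\}$, the planarity/Jordan-curve exclusion of the crossing pairing $\{u,w\},\{v,s\}$, the bipartite-parity bookkeeping that forces each right-hand product to a single pairing type, and the weight-preserving path-toggle bijection all go through as you describe.
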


We complete this section by  presenting the author's enumeration of `\emph{quartered hexagons}' as follows.

\medskip

We start with a trapezoidal region on the triangular lattice whose northern, northeastern, and southern have lengths $n,m,n+\left\lfloor\frac{m+1}{2}\right\rfloor$, respectively, and the western side follows the vertical  zigzag lattice path with $\frac{m}{2}$ steps (when $m$ is odd, the western side has $\frac{m-1}{2}$ and a half `bumps'). Next, we remove $k=\left\lfloor\frac{m+1}{2}\right\rfloor$ up-pointing unit triangles at the positions $a_1, a_2, \dotsc, a_k$ (ordered from left to right) from the base of the trapezoidal region and obtain the \emph{quartered hexagon} $L_{m,n}(a_1,a_2,\dotsc,a_k)$ (see Figure \ref{halfhex3b} (a)  for the case of even $m$, and Figure \ref{halfhex3b}(b) for the case of odd $m$). We also consider the weighted version $\overline{L}_{m,n}(a_1,a_2,\dotsc,a_k)$ of the quartered hexagon $L_{m,n}(a_1,a_2,\dotsc,a_k)$ by assigning to each vertical lozenge on the western  side a weight $\frac{1}{2}$ (see Figures \ref{halfhex3b}(c) and (d); the lozenges having shaded cores  are weighted by $\frac{1}{2}$). The author proved simple product formulas for the above families of quartered hexagons.

\begin{figure}
  \centering
  \includegraphics[width=8cm]{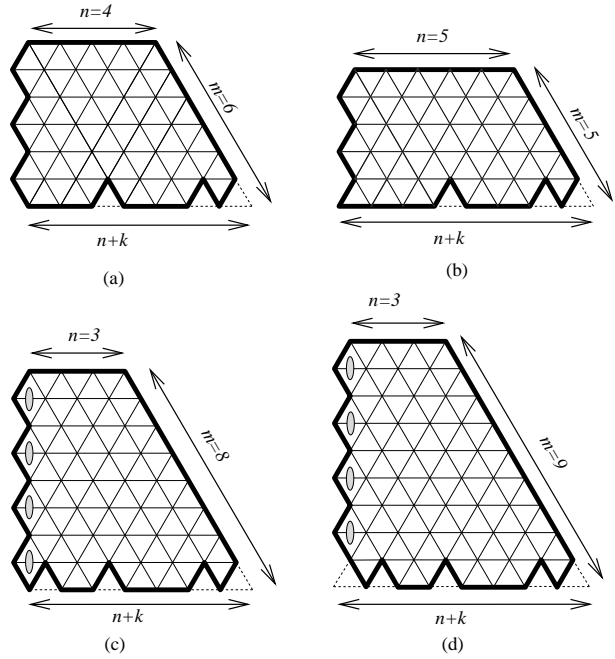}
  \caption{The quartered hexagons: (a) $L_{6,4}(3,6,7)$, (b)  $L_{5,5}(4,7,8)$, (c) $\overline{L}_{8,3}(1,3,6,7)$, and (d) $\overline{L}_{9,3}(1,2,4,7,8)$. The figure first appeared in \cite{Halfhex1}.}\label{halfhex3b}
\end{figure}

\begin{lem}[Lemma 3.1 in \cite{Halfhex1}]\label{QAR}
For any $1\leq k <n$ and $1\leq a_1<a_2<\dotsc<a_k\leq n$
\begin{equation}\label{(a)}
\M(L_{2k,n}(a_1,a_2,\dotsc,a_k))=\frac{a_1a_2\dotsc a_k}{\Hf_2(2k+1)}\prod_{1\leq i<j\leq k}(a_j-a_i)\prod_{1\leq i<j\leq k}(a_i+a_j),
\end{equation}
\begin{equation}\label{(b)}
\M(L_{2k-1,n}(a_1,a_2,\dotsc,a_k))=\frac{1}{\Hf_2(2k)}\prod_{1\leq i<j\leq k}(a_j-a_i)\prod_{1\leq i<j\leq k}(a_i+a_j-1),
\end{equation}
\begin{equation}\label{(c)}
\M(\overline{L}_{2k,n}(a_1,a_2,\dotsc,a_k))=\frac{2^{-k}}{\Hf_2(2k+1)}\prod_{1\leq i<j\leq k}(a_j-a_i)\prod_{1\leq i\leq j\leq k}(a_i+a_j-1),
\end{equation}
\begin{equation}\label{(d)}
\M(\overline{L}_{2k-1,n}(a_1,a_2,\dotsc,a_k))=\frac{1}{\Hf_2(2k)}\prod_{1\leq i<j\leq k}(a_j-a_i)\prod_{1\leq i<j\leq k}(a_i+a_j-2).
\end{equation}
\end{lem}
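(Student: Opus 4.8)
The plan is to establish the four product formulas \eqref{(a)}--\eqref{(d)} by a common induction powered by Kuo condensation (Lemma~\ref{kuothm}), with the Region-splitting Lemma~\ref{RS} used to peel off forced flaps and with elementary base cases (supplemented, if desired, by unfolding to \eqref{CLPeq}) used to anchor the recursion. First I would pass to the planar dual graph of the quartered hexagon and read its matchings as families of $k$ non-intersecting lattice paths: one path starts at each removed up-triangle $a_1 < \dots < a_k$ on the base and travels to the top boundary, while the zigzag western side behaves as a reflecting wall. This lattice-path picture serves two purposes. It makes transparent why the right-hand sides are Vandermonde-type expressions in the squares of the positions --- $\prod_{i<j}(a_j-a_i)(a_i+a_j)=\prod_{i<j}(a_j^2-a_i^2)$ in \eqref{(a)}, and $\prod_{i<j}(a_j-a_i)(a_i+a_j-1)=\prod_{i<j}(b_j^2-b_i^2)$ with $b_i=a_i-\tfrac12$ in \eqref{(b)} --- i.e.\ the symplectic (even $m$) and orthogonal (odd $m$) Weyl-denominator specializations, with \eqref{(c)} and \eqref{(d)} their $\tfrac12$-weighted shifts. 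And it shows that the four cases differ only by the parity of $m$ (which toggles the wall between a $+$ and a $-$ image, equivalently between the presence and absence of the single-variable factor $\prod_i a_i$) and by whether on-axis vertical lozenges carry weight $\tfrac12$, so a single condensation scheme should cover all of them.

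Next I would fix the shape of the region and choose the Kuo quadruple $u,v,w \in V_1$, $s \in V_2$ to be unit triangles placed at the two ends of the base and near the apex, arranged so that each of the six regions $G-\{v\}$, $G-\{u,w,s\}$, $G-\{u\}$, $G-\{v,w,s\}$, $G-\{w\}$, $G-\{u,v,s\}$ occurring in Lemma~\ref{kuothm} is again a quartered hexagon of the same type whose removed-triangle set is obtained from $\{a_1,\dots,a_k\}$ by deleting an extreme index, shifting $a_k$, or inserting one forced position. Whenever deleting a vertex detaches a corner, Lemma~\ref{RS} splits off a fully forced triangular region, so such a term contributes a smaller quartered hexagon times an explicit product of (possibly weighted) forced lozenges. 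This converts Kuo's three-term identity into a closed recurrence for $\M(L_{m,n})$, respectively $\M(\overline L_{m,n})$, among neighbouring position sets; I would run the induction on $k$ together with $\sum_i a_i$, taking as base cases $k=1$ (where $\M(L_{2,n}(a_1))=a_1$ and the other three reduce to one-line counts) and the densely packed configuration $a_i=i$, each verified directly and consistent with \eqref{CLPeq} after unfolding the wall.

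The final step is purely algebraic: substitute the conjectured products into the recurrence produced by condensation and check the identity in the variables $a_1,\dots,a_k$. Because each right-hand side is an antisymmetric Vandermonde-type function in the $a_i^2$ (or in the $(a_i-\tfrac12)^2$, $(a_i-1)^2$), this reduces to a Plücker/Desnanot--Jacobi-type relation, and the normalizing factors $1/\Hf_2(2k+1)$, $1/\Hf_2(2k)$, and $2^{-k}$ are pinned down precisely by the base cases and by weight-consistency of the recurrence. I expect the genuine obstacle to lie in the middle paragraph: arranging one quadruple $(u,v,w,s)$ so that all six condensation terms remain inside the quartered-hexagon family for both parities of $m$ while respecting the reflecting wall, and --- in the two weighted cases \eqref{(c)} and \eqref{(d)} --- carrying the weight $\tfrac12$ of the on-axis lozenges correctly through every Region-splitting reduction. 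Once the vertex placement is nailed down, the four formulas fall out of essentially the same computation with only shift and sign changes, so the bulk of the work is this single uniform recurrence rather than four independent evaluations.
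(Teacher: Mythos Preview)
The present paper does not prove this lemma at all: it is quoted as Lemma~3.1 of \cite{Halfhex1} (originating in \cite{Quarter}) and is used only as an input in the base cases of Theorem~\ref{halffactor1}. There is therefore no proof here to compare your proposal against.

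Evaluated on its own, what you have written is a plausible plan but not a proof. You correctly identify the Vandermonde/Weyl-denominator structure of the right-hand sides and propose Kuo condensation as the engine, which is indeed one standard route to these formulas. But the decisive step --- actually exhibiting a quadruple $(u,v,w,s)$ for which all six regions in Lemma~\ref{kuothm} remain quartered hexagons of the same family (and tracking the weight-$\tfrac12$ lozenges in the $\overline{L}$ cases) --- is not carried out; you explicitly call it ``the genuine obstacle'' and leave it open. Until that placement is specified and verified, the recurrence you intend to check is not yet in hand, so the algebraic verification in your last paragraph has nothing to act on. In short: right strategy, right intuition about why the answers look the way they do, but the middle paragraph is where the proof lives, and that paragraph is presently a promissory note.
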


We note that the regions $L_{m,n}(a_1,a_2,\dotsc,a_{\lfloor\frac{m+1}{2}\rfloor})$ and $\overline{L}_{m,n}(a_1,a_2,\dotsc,a_{\lfloor\frac{m+1}{2}\rfloor})$ were first introduced in \cite{Quarter} by the first author when he enumerated domino tilings of the so-called `\emph{quartered Aztec rectangle}', a generalization of `\emph{quartered Aztec diamond}' introduced by Jockusch and Propp \cite{JP}.  We refer the reader to  e.g. \cite{Fischer, Tri3, Tri4, Tri5, KV} for more related work.

\section{Shuffling Theorem for Doubly-dented halved hexagons}

This section is devoted to shuffling theorems of four families of halved hexagons. We will use these shuffling theorems in our proof of the main theorem (Theorem \ref{reflectfactor}).

\begin{figure}\centering
\includegraphics[width=10cm]{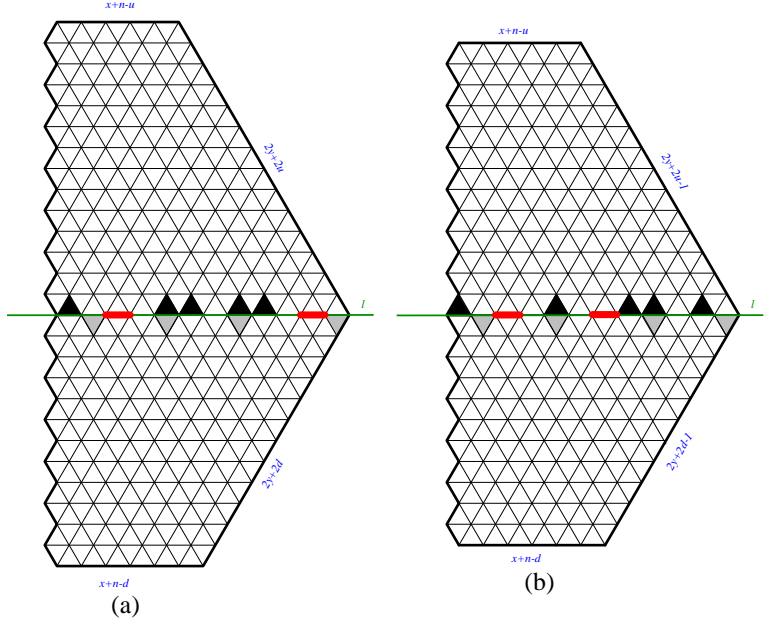}
\caption{Two halved hexagons: (a) $F_{3,2}(1,4,5,8,9;\ 2,5,8,12;\ 3,11)$ and (b) $\overline{F}_{3,2}(1,5,8,9,11;\ 2,5,9,12;\ 3,7)$.}\label{halvedhex}
\end{figure}

Consider a halved hexagon whose north, northeast, southeast, and south sides have lengths $x+n-u,2y+2u,2y+2d,x+n-d$, respectively, and the west side follows a vertical zigzag lattice path with $4y+2u+2d$ steps. Similar to the case of doubly--dented hexagons, we remove $u$ up-pointing unit triangles at the positions in the set $U$ and $d$ down-pointing unit triangles at the positions in the set $D$ along the horizontal lattice line $l$ containing the east vertex of the halved hexagon, such that $|U \cup D|=n$. We also place $b$ barriers at the positions in set $B\subseteq[x+y+n]\setminus(U\cup D)$. Denote by $F_{x,y}(U;D;B)$ the resulting region. See Fig.  \ref{halvedhex}(a) for an example.  We also have a variation $\overline{F}_{x,y}(U;D;B)$ of the region $F_{x,y}(U;D;B)$ illustrated in Fig.  \ref{halvedhex} (b). More precisely, the region $\overline{F}_{x,y}(U;D;B)$ is obtained from the halved hexagons of side-lengths $x+n-u,2y-1+2u,2y-1+2d,x+n-d,4y+2u+2d-2$ (in clockwise order, from the north side) by removing along the horizontal lattice line $l$ those up-pointing unit triangles at the positions in $U$ and down-pointing triangles at the positions in $D$, and placing barriers at the positions in $B$.

\begin{thm}[Shuffling Theorem for Halved Hexagons 1] \label{halffactor1} Assume that $x,y,u,d,n$ ($u,d,u',d'\leq n$) are nonnegative integers and that $U=\{s_1,s_2,\dotsc,s_u\}$,  $D=\{t_1,t_2,\dots, t_d\}$, $U'=\{s'_1,s'_2,\dotsc,s'_{u'}\}$, and  $D'=\{t'_1,t'_2,\dots, t'_{d'}\}$ are  four ordered subsets  of $[x+y+n]$,  such that $U\cup D =U'\cup D'$ and $U\cap D =U'\cap D'$. Assume $B\subset[x+y+n]\setminus (U\cup D)$, such that $|B|\leq x$, we have
\begin{align}\label{halfeq1}
  &\frac{\M(F_{x,y}(U;D;B))}{\M(F_{x,y}(U';D';B))}= \frac{\displaystyle\prod_{1\leq i <j\leq u}(s^2_j-s^2_i)\displaystyle\prod_{1\leq i <j\leq d}(t^2_j-t^2_i)}{\displaystyle\prod_{1\leq i <j\leq u'}(s'^2_j-s'^2_i)\displaystyle\prod_{1\leq i <j\leq d'}(t'^2_j-t'^2_i)}\frac{\Hf_2(2u'+2y+1)\Hf_2(2d'+2y+1)}{\Hf_2(2u+2y+1)\Hf_2(2d+2y+1)}
\end{align}
and
\begin{align}\label{halfeq2}
  \frac{\M(\overline{F}_{x,y}(U;D;B))}{\M(\overline{F}_{x,y}(U';D';B))}= &\frac{\displaystyle\prod_{1\leq i <j\leq u}(s_j-s_i)(s_j+s_i-1)}{\displaystyle\prod_{1\leq i <j\leq u'}(s'_j-s'_i)(s'_j+s'_i-1)}\notag\\
  &\times \frac{\displaystyle\prod_{1\leq i <j\leq d}(t_j-t_i)(t_j+t_i-1)}{\displaystyle\prod_{1\leq i <j\leq d'}(t'_j-t'_i)(t'_j+t'_i-1)}\frac{\Hf_2(2u'+2y)\Hf_2(2d'+2y)}{\Hf_2(2u+2y)\Hf_2(2d+2y)}.
\end{align}
\end{thm}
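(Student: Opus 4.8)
## Proof Proposal

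The plan is to prove both identities in Theorem \ref{halffactor1} by a Kuo-condensation induction on $n = |U \cup D|$, in direct parallel with the strategy used for the doubly-dented hexagon (the original Shuffling Theorem). The base case is $n \leq 1$: when $U \cup D = \emptyset$ the region $F_{x,y}(\emptyset;\emptyset;B)$ is a fixed halved hexagon whose tiling number is independent of $B$ (barriers sitting on a region with no removed triangles can, after forcing lozenges, be absorbed), so both sides of \eqref{halfeq1} equal $1$; when $|U \cup D| = 1$ the symmetric-difference set $U \Delta D$ has at most one element, so $U' = U$, $D' = D$ up to flipping a single triangle, and the ratio of products on the right-hand side reduces to a ratio of the form $\Hf_2(2y+3)\Hf_2(2y+1)/\bigl(\Hf_2(2y+1)\Hf_2(2y+3)\bigr)=1$ matched against $\M(F)/\M(F)$ computed by the Region-splitting Lemma \ref{RS} together with the quartered-hexagon formula \eqref{(a)} (resp. \eqref{(c)} for the barred version). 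Actually, the cleanest base case is to reduce, via Lemma \ref{RS}, a halved hexagon with \emph{no} barriers and a \emph{single type} of removed triangle to a quartered hexagon $L_{2y+2u,\,*}$ or $L_{2y+2d,\,*}$ and read off the value from Lemma \ref{QAR}; this is where the skipping hyperfactorials $\Hf_2(2u+2y+1)$, $\Hf_2(2d+2y+1)$ (resp. $\Hf_2(2u+2y)$, $\Hf_2(2d+2y)$) enter, since these are exactly the denominators appearing in \eqref{(a)}--\eqref{(d)}.

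For the inductive step I would set up Kuo condensation (Lemma \ref{kuothm}) on the dual graph $G$ of $F_{x,y}(U;D;B)$. The four vertices $u,v,w,s$ should be chosen so that deleting them corresponds to adjoining or removing one up-pointing and one down-pointing dent from the horizontal line $l$: concretely, pick $v$ and $s$ to be the unit triangles at the two ends of a chosen edge on $l$, and $u,w$ to be far-apart corner triangles, so that each of the six terms in the condensation recurrence is (up to forced lozenges) again an $F$-region with $n-1$ or $n-2$ removed triangles. This turns \eqref{halfeq1} into a linear recurrence that is satisfied by the claimed product formula, provided one checks the corresponding polynomial identity among the $\prod(s_j^2 - s_i^2)$ and $\Hf_2$ factors — this amounts to a Plücker-type / Dodgson-condensation identity for the Schur-like function $\prod_{i<j}(s_j^2 - s_i^2)$, which is the symplectic/even-orthogonal analogue of the Vandermonde-type identity used in \cite{shuffling}. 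The barred case \eqref{halfeq2} is handled the same way with $s_j^2 - s_i^2$ replaced by $(s_j - s_i)(s_j + s_i - 1)$ and the appropriate weight $\tfrac12$ bookkeeping on the zigzag side; the relevant quartered-hexagon input is \eqref{(c)}.

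A key technical point is that, exactly as in the original shuffling theorem, the barrier set $B$ must drop out of the final ratio. I would handle this by proving a slightly stronger statement: that for \emph{each fixed} $B$ the ratio $\M(F_{x,y}(U;D;B))/\M(F_{x,y}(\emptyset;\emptyset;B))$ equals the $B$-free product on the right, and then divide. The condensation recurrence preserves $B$ in all six terms (the chosen cells $u,v,w,s$ are placed away from the barrier positions, using the hypothesis $|B| \leq x$ to guarantee enough room), so the induction goes through uniformly in $B$; the $B$-dependence then cancels in the ratio. One also must verify that in each condensation term the removed-triangle data of the six subregions again satisfies $U \cup D = U' \cup D'$, $U \cap D = U' \cap D'$ — this is automatic because shuffling commutes with adding/removing a single dent outside $U \Delta D$.

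The main obstacle, as in \cite{shuffling}, is the careful geometric bookkeeping: choosing the four condensation vertices so that all six resulting regions are genuinely of type $F_{x,y}$ (after stripping forced lozenges) with correctly updated parameters $x, y, u, d$ and positions, and tracking the multiplicative weight contributions of those forced lozenges (including the $\tfrac12$-weighted vertical lozenges on the zigzag side in the barred case). Once the recurrence is correctly identified, the remaining step — checking that the conjectured product formula satisfies it — is a routine but somewhat lengthy algebraic verification of a Vandermonde/Plücker identity in the squared variables $s_i^2$ (resp. in $s_i$ and $s_i-1$), which I would carry out by factoring out common terms and reducing to the three-variable case. I expect no conceptual difficulty beyond faithfully adapting the framework of Theorem 2.4 in \cite{shuffling} from the full-hexagon setting to the halved-hexagon setting, with Lemma \ref{QAR} supplying the new closed-form base cases.
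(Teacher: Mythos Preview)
Your overall toolkit (Kuo condensation plus Lemma~\ref{QAR} via the Region-Splitting Lemma) is exactly right, but the induction you describe will not close. You induct on $n=|U\cup D|$ and want each of the six Kuo terms to be an $F$-region with $n-1$ or $n-2$ removed triangles. But removing a vertex from the dual graph $G$ corresponds to \emph{removing a unit triangle from the region}, i.e.\ \emph{adding} a dent on~$l$, not erasing one; there is no natural way to make Kuo condensation lower~$n$. Relatedly, your proposed base case $n\le 1$ is essentially vacuous for a ratio statement (with at most one element in $U\Delta D$ one has $U'=U$, $D'=D$ and both sides equal~$1$), and your ``stronger statement'' comparing $\M(F_{x,y}(U;D;B))$ with $\M(F_{x,y}(\emptyset;\emptyset;B))$ is not well-posed: the side-lengths of the halved hexagon depend on $u,d,n$, so those two regions do not even have the same outer shape.

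The paper instead inducts on $x+y$. The base cases are $y=0$ and $x=|B|$; in each, the Region-Splitting Lemma cuts the region along~$l$ into two quartered hexagons $L_{2k,*}$ (respectively $L_{2k-1,*}$ for the $\overline{F}$-case), and Lemma~\ref{QAR} gives both numerator and denominator explicitly --- this is where the $\Hf_2$ factors genuinely enter. For the inductive step one applies Kuo condensation not to $F_{x,y}(U;D;B)$ itself but to that region with two extra layers glued on top; the four special triangles are the upper-right and lower-right corners together with the leftmost and rightmost free positions $\alpha,\beta$ on~$l$. After stripping forced lozenges, all six terms are $F$-regions with parameters among $(x,y)$, $(x-1,y)$, $(x,y-1)$, $(x-1,y-1)$ and with $U$ replaced by $U$, $\alpha U$, $\beta U$, or $\alpha\beta U$ (so $u$ goes \emph{up}, not down). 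The algebraic check then reduces to the single identity $\dfrac{\Delta(\alpha U)\Delta(\beta U)}{\Delta(U)\Delta(\alpha\beta U)}=\dfrac{1}{|\beta^2-\alpha^2|}$ (and the same with $U'$), which is immediate. One more small correction: for $\overline{F}$ the relevant quartered-hexagon formula is~\eqref{(b)} (the $L_{2k-1,n}$ case), not the weighted formula~\eqref{(c)}.
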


\begin{figure}\centering
\includegraphics[width=10cm]{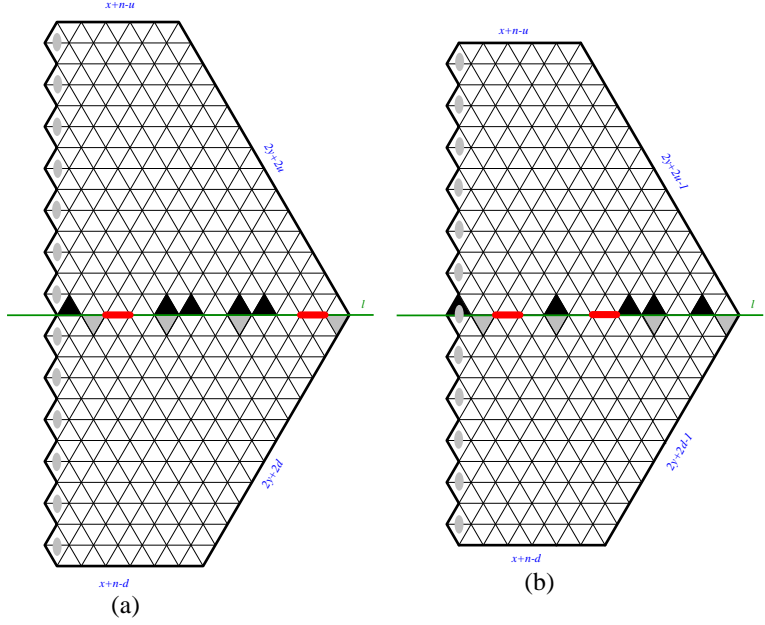}
\caption{Two weighted halved hexagons: (a) $W_{3,2}(1,4,5,8,9;\ 2,5,8,12;\ 3,11)$ and (b) $\overline{W}_{3,2}(1,5,8,9,11;\ 2,5,9,12;\ 3,7)$. The vertical lozenges with shaded cores are weighted by $\frac{1}{2}$.}\label{halvedhex4}
\end{figure}

Motivated by Ciucu's weighted counterpart of the halved hexagon $\mathcal{P}_{a,b,c}$ in \cite{Ciucu1}, we consider the following weighted versions of the $F$- and $\overline{F}$-type halved hexagons by assigning to each vertical lozenge on their west sides a weight $1/2$. Denote by $W_{x,y}(U;D;B)$ and $\overline{W}_{x,y}(U;D;B)$ the corresponding regions (see Figure \ref{halvedhex4} for examples).

\begin{thm} [Shuffling Theorem for Halved Hexagons 2] \label{halffactor2} With the same assumptions in Theorem \ref{halffactor1}, we have
\begin{align}\label{halfeq3}
  \frac{\M(W_{x,y}(U,D,B))}{\M(W_{x,y}(U',D',B))}= &\frac{\displaystyle\prod_{1\leq i <j\leq u}(s_j-s_i)\displaystyle\prod_{1\leq i \leq j\leq u}(s_i+s_j-1)}{\displaystyle\prod_{1\leq i <j\leq u'}(s'_j-s'_i)\displaystyle\prod_{1\leq i \leq j\leq u'}(s'_i+s'_j-1)}\notag\\
  &\times\frac{\displaystyle\prod_{1\leq i <j\leq d}(t_j-t_i)\displaystyle\prod_{1\leq i \leq j\leq d}(t_i+t_j-1)}{\displaystyle\prod_{1\leq i <j\leq d'}(t'_j-t'_i)\displaystyle\prod_{1\leq i \leq j\leq d'}(t'_i+t'_j-1)} \frac{\Hf_2(2u'+2y+1)\Hf_2(2d'+2y+1)}{\Hf_2(2u+2y+1)\Hf_2(2d+2y+1)}
\end{align}
and
\begin{align}\label{halfeq4}
  \frac{\M(\overline{W}_{x,y}(U,D,B))}{\M(\overline{W}_{x,y}(U',D',B))}= &\frac{\displaystyle\prod_{1\leq i <j\leq u}(s_j-s_i)(s_j+s_i-2)}{\displaystyle\prod_{1\leq i <j\leq u'}(s'_j-s'_i)(s'_j+s'_i-2)}\notag\\
  &\times\frac{\displaystyle\prod_{1\leq i <j\leq d}(t_j-t_i)(t_j+t_i-2)}{\displaystyle\prod_{1\leq i <j\leq d'}(t'_j-t'_i)(t'_j+t'_i-2)}\frac{\Hf_2(2u'+2y)\Hf_2(2d'+2y)}{\Hf_2(2u+2y)\Hf_2(2d+2y)}.
\end{align}
\end{thm}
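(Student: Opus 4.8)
The plan is to prove Theorem~\ref{halffactor2} in parallel with Theorem~\ref{halffactor1}, using the same Kuo-condensation machinery, since the only differences are the weights $1/2$ on the west-side vertical lozenges and the corresponding shifts in the products and in the skipping hyperfactorials. First I would reduce the shuffling statement to a ``one-step'' statement: it suffices to prove the identity in the case where $U'$ and $D'$ are obtained from $U$ and $D$ by moving a single element of the symmetric difference $U\Delta D$ from one position to an adjacent free position (or by flipping a single element's orientation at the same position). Since any two pairs $(U,D)$ and $(U',D')$ with $U\cup D=U'\cup D'$ and $U\cap D=U'\cap D'$ are connected by a sequence of such elementary moves, and since the right-hand sides of \eqref{halfeq3} and \eqref{halfeq4} are multiplicative along such a sequence (each factor is a ratio of Vandermonde-like products), the general statement follows by telescoping. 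This is exactly the strategy used in \cite{shuffling} for the unsymmetric doubly-dented hexagon, and I would invoke that reduction here.

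Next, for the elementary move I would set up a Kuo condensation (Lemma~\ref{kuothm}) on the dual graph of the weighted halved hexagon $W_{x,y}(U;D;B)$ (resp.\ $\overline{W}_{x,y}(U;D;B)$). Concretely, I would pick the four vertices $u,v,w,s$ on the base line $l$ so that removing them from the dual graph produces halved hexagons of the same type but with one more or one fewer dent of each orientation, plus — crucially — regions that, after applying forced-lozenge removals and the Region-splitting Lemma~\ref{RS}, factor as a product of a smaller halved hexagon of the same family and a \emph{quartered hexagon} of the type $L_{m,n}$ or $\overline{L}_{m,n}$ enumerated in Lemma~\ref{QAR}. The quartered-hexagon factor is where the closed-form products $\prod(a_j-a_i)\prod(a_i+a_j-1)$, $\prod(a_i+a_j-2)$, etc., and the skipping hyperfactorials $\Hf_2$ enter: the weighting $1/2$ on the west side of $W$ matches the weighting in $\overline{L}_{2k,n}$ (giving the $(a_i+a_j-1)$ with $i\le j$ in \eqref{halfeq3}), while the $\overline{W}$ case with the extra truncation matches $\overline{L}_{2k-1,n}$ (giving $(a_i+a_j-2)$ in \eqref{halfeq4}). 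Plugging the Kuo identity into the ratio $\M(W_{x,y}(U;D;B))/\M(W_{x,y}(U';D';B))$ and cancelling the common smaller-halved-hexagon factors, one is left with a purely algebraic identity among products of the explicit $L/\overline{L}$ formulas, which I would verify directly.

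To make the induction go through I would set up a double induction on $n$ (the total number of dents) and on some measure of how far $(U,D)$ is from a ``standard'' configuration, with the base case $n=0$ (no dents) being a known halved-hexagon enumeration — Proctor's Theorem~\ref{Proctiling} / Ciucu's Theorem~\ref{Ciucuhalfhex} in the weighted case, or more precisely their doubly-dented generalizations with $U=D=\emptyset$. I would also need the degenerate cases where a dent sits at position $1$ or at the extreme right, where the Kuo recurrence has a vanishing term; these are handled by the forced-lozenge observation at the start of Section~2 together with Lemma~\ref{RS}, exactly as in \cite{shuffling}.

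The main obstacle I expect is bookkeeping: correctly identifying which four cells to condense on so that all four ``minus'' regions split cleanly into a halved hexagon of the \emph{same} family plus a quartered hexagon, and then matching the half-integer weights so that the product formulas in Lemma~\ref{QAR} produce precisely the shifted factors $(s_i+s_j-1)$ with $i\le j$ in \eqref{halfeq3} and $(s_j+s_i-2)$ in \eqref{halfeq4}, together with the exact $\Hf_2$ arguments $2u+2y+1$ versus $2u+2y$. A secondary technical point is ensuring the barrier set $B$ plays no role: since barriers only forbid vertical lozenges, they survive untouched through every Kuo step and every region split, so — as in the unsymmetric case — the final ratio is independent of $B$; I would remark that this is why $|B|\le x$ (rather than any sharper condition) is all that is needed for the regions to be non-degenerate.
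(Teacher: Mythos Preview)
Your outline diverges from the paper's argument and contains gaps that would prevent it from going through as written. The paper's proof of Theorem~\ref{halffactor2} is declared to be essentially identical to that of Theorem~\ref{halffactor1}, with only the added remark that every forced lozenge arising in the argument lies away from the weighted west side and hence has weight~$1$. That proof is an induction on $x+y$, not on $n$. The base cases are $y=0$ and $x=b$: in each, Lemma~\ref{RS} splits the region along $l$ into two quartered hexagons (of type $\overline{L}_{2k,n}$ for the $W$-family, of type $\overline{L}_{2k-1,n}$ for $\overline{W}$), and Lemma~\ref{QAR} yields the claimed products directly --- this is the \emph{only} place the $\overline{L}$-formulas enter. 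In the inductive step the four Kuo vertices are not all on $l$: the paper takes $v,s$ at the upper-right and lower-right \emph{corners} of a two-layer upward extension of the region, and $u,w$ at the up-triangles on $l$ at the extreme positions $\alpha,\beta\in(U\cup D\cup B)^c$. All six resulting regions are again $W$- (resp.\ $\overline{W}$-) halved hexagons with strictly smaller $x+y$; none of them factor further. One then checks that the right-hand side satisfies the same recurrence, and that check collapses to an identity of the shape $\Delta(\alpha U)\Delta(\beta U)/\bigl(\Delta(U)\Delta(\alpha\beta U)\bigr)=\text{const}$, independent of $U$, so the primed and unprimed contributions cancel.

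On your specific steps: the ``move an element of $U\Delta D$ to an adjacent free position'' operation changes $U\cup D$ and is therefore not a legal shuffle --- only single flips are admissible, so your telescoping reduction survives but in simpler form (and is in any case unnecessary, since the paper handles arbitrary $(U',D')$ in one shot). More seriously, your base case $n=0$ forces $U=D=U'=D'=\emptyset$, whence the identity reads $1=1$ and carries no content; Proctor and Ciucu are not invoked anywhere in the paper's argument. Your proposed Kuo step neither decreases $n$ nor visibly drives the configuration toward a factorable ``standard'' one, so the double induction has no engine. Finally, placing all four Kuo vertices on $l$ is not what the paper does, and you have not explained how four such interior triangles lie on a common face of the dual graph; the paper's choice of two corner vertices is precisely what makes the cyclic-order hypothesis of Lemma~\ref{kuothm} hold and what drives $x+y$ down. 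The splitting into ``halved hexagon $\times$ quartered hexagon'' you anticipate in the Kuo step does not occur there --- it is exactly the mechanism of the base cases $y=0$ and $x=b$.
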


We also note that, in general, the two tiling numbers on the left-hand side of each of the identities (\ref{halfeq1}), (\ref{halfeq2}), (\ref{halfeq3}), and (\ref{halfeq4}) are \emph{not} given by simple product formulas. In other words, all large prime factors in the prime factorizations of these tiling numbers cancel out.


\begin{rmk}
(1). One can also find similar shuffling theorems for `mixed-boundary halved hexagons' investigated by the author in  \cite{Halfhex3}. However, we only choose the above four families of halved hexagons to investigate in this paper.

(2). Similar to Remark 2.5 in \cite{shuffling}, one can obtain shuffling theorems for halved hexagons in which the removed unit triangle are running along an arbitrary horizontal axis (not necessarily on the axis containing the east vertex) by considering forced lozenges. The observation will not be consider in detail in this paper and left to the reader as an exercise.
\end{rmk}

\section{Asymptotic Results}
We call, in general, all removed unit triangles and barriers in our regions the `\emph{obstacles}'. We now assume that the set of all obstacles is partitioned into $k$ separated `clusters' (i.e. chain of contiguous unit triangles and barriers). Denote by $C_1,C_2,\dotsc,C_k$ these clusters and the distances between two consecutive ones are $d_1,d_2,\dotsc,d_{k-1}$ ($d_i>0$), ordered from left to right.  For the sake of convenience, we always assume that $C_1$ is attaching to the west vertex of the hexagon and that $C_k$ is attaching to the east vertex of the hexagon; $C_1,C_k$ may be empty.  We use the notation $F_{x,y}(C_1,\dots,C_k|\ d_1,\dots,d_{k-1})$ for our $F$-type regions in this section, and similarly for the $\overline{F}$-, $W$- and $\overline{W}$-type regions. Denote by $U_i,D_i,B_i$ the index sets of removed up-pointing unit triangles, removed down-pointing unit triangles, and barriers in the cluster $C_i$ (these index sets may be empty, $U_i$ and $D_i$ may be overlapping, and $B_i\cap (U_i\cup D_i)=\emptyset$). We `shuffle' the positions of up- and down-pointing triangles in the symmetric difference $U_i\Delta D_i$ of the cluster  $C_i$, i.e.  we can change the positions of the removed unit triangles in $U_i\Delta D_i$, but keep the orientations of those unit triangles stay put. This way, we get a new cluster $C'_i$ that has the same numbers of up- and down-pointing unit triangles as that in $C_i$. In the rest of this section we assume that $|C_i|=f_i$, $|U_i|=u_i$, and $|D_i|=d_i$ for any $i=1,2,\dots,k$.

 Motivated in part by the work of Ciucu and Krattenthaler in \cite{Ciu1,CK} and the author's generalizations in \cite{Threefern, shuffling}, we would like to investigate the behavior of the tiling number of large quartered hexagons with fixed clusters of obstacles.

\begin{thm}\label{dual1}For nonnegative integer $x,y$
\begin{align}\label{dualeq1}
\lim_{N\to \infty}&\frac{\M(F_{Nx,Ny}(C_1,\dots,C_k|\ Nd_1,\dots,Nd_{k-1}))}{\M(F_{Nx,Ny}(C'_1,\dots,C'_k|\ Nd_1,\dots,Nd_{k-1}))}=\prod_{i=1}^{k}\frac{s^+(C_i)s^-(C_i)}{s^+(C'_i)s^-(C'_i)}
\end{align}
 where $s^+(C_i)=\M(L_{2u_i,f_i-u_i}(U_i))$ and $s^-(C_i)=\M(L_{2d_i,f_i-d_i}(D_i))$ are the tiling numbers of the quartered hexagons whose dents are defined by the up-pointing triangles and down-pointing triangles in the cluster $C_i$, and where $s^+(C'_i)=\M(L_{2u_i,f_i-u_i}(U'_i))$ and $s^-(C'_i)=\M(L_{2u_i,f_i-u_i}(D'_i))$ are defined similarly w.r.t. $C'_i$.
\end{thm}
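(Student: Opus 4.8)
\textbf{Proof plan for Theorem \ref{dual1}.}

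The plan is to take the exact shuffling identity for halved hexagons (Theorem \ref{halffactor1}, equation (\ref{halfeq1})) and pass to the limit $N\to\infty$ after rescaling. First I would observe that within a single cluster $C_i$, a shuffle of the up-pointing triangles in $U_i\Delta D_i$ is exactly a shuffle of the entire set $U$ of up-pointing positions (leaving the down-pointing positions fixed), so that a sequence of $k$ cluster-shuffles realizing $C_i\mapsto C'_i$ for all $i$ is a single instance of the global shuffle in Theorem \ref{halffactor1} with $D=D'$ and $U\cup D=U'\cup D'$, $U\cap D=U'\cap D'$. Hence for the region $F_{Nx,Ny}$ with clusters placed at positions dictated by $Nd_1,\dots,Nd_{k-1}$, the ratio on the left-hand side of (\ref{dualeq1}) equals
\begin{equation*}
\frac{\displaystyle\prod_{1\leq i<j\leq u}(s_j^2-s_i^2)}{\displaystyle\prod_{1\leq i<j\leq u'}(s_j'^2-s_i'^2)}\cdot\frac{\Hf_2(2u'+2Ny+1)}{\Hf_2(2u+2Ny+1)},
\end{equation*}
since $u=\sum u_i=\sum u_i'=u'$ makes the two skipping-hyperfactorial factors (and also the down-pointing factors, which are unchanged) cancel completely; the only surviving contribution is the ratio of the $\prod(s_j^2-s_i^2)$ products and, symmetrically, the $\prod(t_j^2-t_i^2)$ products.

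Next I would factor the product $\prod_{i<j}(s_j^2-s_i^2)$ according to the cluster decomposition: it splits as $\prod_{\ell}\big(\prod_{a<b\text{ in }U_\ell}(s_b^2-s_a^2)\big)$ times the ``cross terms'' $\prod_{\ell<m}\prod_{a\in U_\ell,\,b\in U_m}(s_b^2-s_a^2)$. The within-cluster factors are exactly the quantities appearing in the formula (\ref{(a)}) of Lemma \ref{QAR} for $\M(L_{2u_\ell,f_\ell-u_\ell}(U_\ell))$ — indeed by Lemma \ref{QAR}(a), $\M(L_{2u_\ell,n}(U_\ell))=\frac{(\prod s_a)}{\Hf_2(2u_\ell+1)}\prod_{a<b}(s_b-s_a)\prod_{a<b}(s_a+s_b)$, and since $U_\ell$ and $U'_\ell$ have the same size and (crucially) the same entries read modulo the cluster's left endpoint are governed by a quartered hexagon of the same dimensions, after translating each $U_\ell$ so its cluster starts near a fixed offset one gets that the ratio of within-cluster $s^2$-products equals $s^+(C_\ell)/s^+(C'_\ell)$ up to the extra single-variable factor $\prod s_a/\prod s'_a$ and the offset shift. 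The point of taking $N\to\infty$ is precisely that the cross terms and these offset/single-variable discrepancies wash out: as $N\to\infty$ the positions in $U_\ell$ and $U'_\ell$ differ only by bounded amounts while their mutual separations grow like $N$, so each cross factor $(s_b^2-s_a^2)/(s_b'^2-s_a'^2)\to 1$, and likewise $\prod_{a\in U_\ell}s_a/\prod_{a\in U'_\ell}s_a\to 1$ because $s_a,s_a'\to\infty$ with bounded difference. This leaves exactly $\prod_\ell s^+(C_\ell)/s^+(C'_\ell)$, and repeating the argument verbatim for the down-pointing triangles gives the factor $\prod_\ell s^-(C_\ell)/s^-(C'_\ell)$, yielding (\ref{dualeq1}).

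The main obstacle is bookkeeping the precise correspondence between the entries of $U_\ell$ inside the large region $F_{Nx,Ny}$ and the dent positions of the quartered hexagon $L_{2u_\ell,f_\ell-u_\ell}(U_\ell)$: one must check that the ``small'' parameter governing $s^+(C_\ell)$ is the position of the triangles relative to the \emph{left end of their own cluster} (so that it stays $O(1)$), that the quartered hexagon's vertical dimension $f_\ell-u_\ell$ is indeed determined by the cluster size and not by $N$, and that the single-variable prefactors $\prod s_a$ in Lemma \ref{QAR}(a) genuinely cancel in the limit rather than contributing. I would handle this by writing $s_a = L_\ell + \sigma_a$ with $L_\ell$ the (order-$N$) left endpoint of cluster $C_\ell$ and $\sigma_a=O(1)$ the intra-cluster coordinate, expanding $s_b^2-s_a^2=(\sigma_b-\sigma_a)(2L_\ell+\sigma_a+\sigma_b)$ for $a,b$ in the same cluster, and noting the common factor $\prod(2L_\ell+\cdots)$ matches between $U_\ell$ and $U'_\ell$ up to a ratio tending to $1$; the honest within-cluster shape invariant that survives is $\prod_{a<b}(\sigma_b-\sigma_a)$ together with $\prod_{a<b}(\sigma_a+\sigma_b+2L_\ell-\text{const})$, which for $L_\ell$ large is asymptotically the quartered-hexagon count. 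A clean way to make this rigorous is to apply Theorem \ref{halffactor1} \emph{one cluster at a time} — shuffle only $C_i$ to $C'_i$, take $N\to\infty$, identify the limiting factor as $s^+(C_i)s^-(C_i)/\big(s^+(C'_i)s^-(C'_i)\big)$ via Lemma \ref{QAR}, and then telescope over $i=1,\dots,k$ — which isolates the analysis to a single cluster at each stage and makes the cancellation of the hyperfactorials and cross terms most transparent.
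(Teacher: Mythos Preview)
Your approach matches the paper's: apply Theorem \ref{halffactor1} (the hyperfactorials cancel since $u=u'$, $d=d'$), split each product $\prod_{i<j}(s_j^2-s_i^2)$ into within-cluster and cross-cluster factors, and show that every cross-cluster ratio tends to $1$ as $N\to\infty$. The paper's write-up is actually terser than yours: it squares everything to sidestep sign bookkeeping, asserts directly that the surviving within-cluster ratios satisfy $\Delta(U_i)^2/\Delta(U'_i)^2=s^+(C_i)^2/s^+(C'_i)^2$, and takes the square root --- it does not carry out the position analysis you attempt in your ``main obstacle'' paragraph.

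One correction: the cluster shuffle in this section moves \emph{both} up- and down-pointing triangles within each $C_i$ (only the counts $u_i,d_i$ are preserved), so your opening claim that $D=D'$ is not right in general; you effectively repair this later when you ``repeat the argument verbatim for the down-pointing triangles.'' Your ``main obstacle'' paragraph flags a genuine subtlety --- matching the global positions $s_a=L_\ell+\sigma_a$ in the $\Delta$-products to the local dent positions appearing in Lemma \ref{QAR}(a), and the fate of the linear prefactor $\prod_p a_p$ there --- that the paper's proof does not work through either. Note, however, that your proposed telescoping fix (shuffling one cluster at a time) does not by itself close this gap: at each step you are still left with exactly the same within-cluster identification to justify.
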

\begin{thm}\label{dual2}For nonnegative integer $x,y$
\begin{align}\label{dualeq2}
\lim_{N\to \infty}&\frac{\M(\overline{F}_{Nx,Ny}(C_1,\dots,C_k|\ Nd_1,\dots,Nd_{k-1}))}{\M(\overline{F}_{Nx,Ny}(C'_1,\dots,C'_k|\ Nd_1,\dots,Nd_{k-1}))}=\prod_{i=1}^{k}\frac{\overline{s}^+(C_i)\overline{s}^-(C_i)}{\overline{s}^+(C'_i)\overline{s}^-(C'_i)}
\end{align}
 where $\overline{s}^+(C_i)=\M(L_{2u_i-1,f_i-u_i}(U_i))$ and $\overline{s}^-(C_i)=\M(L_{2d_i-1,f_i-d_i}(D_i))$ are the tiling numbers of the quartered hexagons $L_{2k-1,n}$ whose dents are defined by the up-pointing triangles and down-pointing triangles in the cluster $C_i$, and where $s^+(C'_i)=\M(L_{2u_i-1,f_i-u_i}(U'_i))$ and $s^-(C'_i)=\M(L_{2u_i-1,f_i-u_i}(D'_i))$ are defined similarly w.r.t. $C'_i$.
\end{thm}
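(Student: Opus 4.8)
We shall derive~(\ref{dualeq2}) from the exact shuffling identity~(\ref{halfeq2}) of Theorem~\ref{halffactor1} by shuffling the clusters one at a time and then letting $N\to\infty$; the identity~(\ref{dualeq1}) of Theorem~\ref{dual1} will follow in exactly the same way from~(\ref{halfeq1}) and part~(\ref{(a)}) of Lemma~\ref{QAR}.

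\emph{Reduction to a one--cluster change.}
One passes from $\overline F_{Nx,Ny}(C_1,\dots,C_k\,|\,Nd_1,\dots,Nd_{k-1})$ to $\overline F_{Nx,Ny}(C'_1,\dots,C'_k\,|\,Nd_1,\dots,Nd_{k-1})$ by replacing the clusters one after another, and replacing a single $C_m$ by $C'_m$ only shuffles the triangles of $U_m\Delta D_m$ while keeping $U_m\cup D_m$, $U_m\cap D_m$ and the barrier set fixed; hence each such step is an instance of Theorem~\ref{halffactor1}. Therefore the ratio on the left of~(\ref{dualeq2}) telescopes into a product of $k$ ratios, each corresponding to a one--cluster change, and it is enough to prove the limit when $C'_j=C_j$ for all $j\neq m$. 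For such a change the totals $u=\sum_i u_i$ and $d=\sum_i d_i$ of removed up-- and down--pointing triangles are unchanged, so $u=u'$, $d=d'$, and the four skipping--hyperfactorial factors in~(\ref{halfeq2}) cancel; what remains is a ratio of products of $(s_j-s_i)(s_j+s_i-1)$ and $(t_j-t_i)(t_j+t_i-1)$.

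\emph{Splitting the products.}
Let $s_1<\dots<s_u$ be the positions of the removed up--pointing triangles, read along the axis $l$ from the west, and let $I_m\subseteq[u]$ be the block of indices belonging to $C_m$; as the clusters are ordered and pairwise separated, $I_m$ is an interval of consecutive integers. Splitting $\prod_{1\le i<j\le u}(s_j-s_i)(s_j+s_i-1)$ according to whether a pair of indices lies entirely outside $I_m$, entirely inside $I_m$, or has one index in each, the ``entirely outside'' part is not affected by the one--cluster change and cancels in the ratio. Hence the one--cluster ratio equals
\[
\frac{\prod_{i<j,\ i,j\in I_m}(s_j-s_i)(s_j+s_i-1)}{\prod_{i<j,\ i,j\in I'_m}(s'_j-s'_i)(s'_j+s'_i-1)}\cdot(\text{cross factors})\cdot(\text{the two analogous factors for the down--pointing triangles}).
\]

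\emph{Taking the limit.}
Write $P_m=P_m(N)$ for the position of the left edge of $C_m$, so that $s_a=P_m+\widehat s_a$ for $a\in I_m$ with $\widehat s_a$ bounded by $|C_m|=f_m$; here $P_1=O(1)$ (the cluster attached to the west vertex), $P_m=\Theta(N)$ for $2\le m\le k$, and any two distinct clusters lie $\Theta(N)$ apart. Each cross factor then has the shape $\bigl((P_{m'}-P_m)+O(1)\bigr)\bigl((P_{m'}+P_m)+O(1)\bigr)$; there are boundedly many of them, and upon dividing by the corresponding primed factors each gives $1+O(1/N)$, so the cross contribution tends to $1$. The within--$C_m$ factor is $\prod_{i<j}(\widehat s_j-\widehat s_i)(2P_m+\widehat s_i+\widehat s_j-1)$ divided by its primed analogue, which by Lemma~\ref{QAR} can be written as a ratio of tiling numbers of quartered hexagons of type $L_{2u_m-1,\,\cdot}$; letting $N\to\infty$ (the finite value $P_1$ is kept, whereas for $m\ge2$ the factors $2P_m+\widehat s_i+\widehat s_j-1$ each contribute a limit $1$ to the ratio) identifies this with $\overline s^{\,+}(C_m)/\overline s^{\,+}(C'_m)$, and likewise for the down--pointing triangles one obtains $\overline s^{\,-}(C_m)/\overline s^{\,-}(C'_m)$. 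Multiplying the $k$ resulting one--cluster limits gives~(\ref{dualeq2}).

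\emph{Main difficulty.}
The delicate step, which must be carried out carefully, is the last one: one has to pin down the normalization of the cluster--local positions under which the limit of each surviving within--cluster product equals \emph{exactly} the quartered--hexagon tiling number in the statement — treating the cluster(s) abutting a special vertex, where the factors of the form $s_i+s_j$ genuinely persist in the limit, separately from the floating clusters — and one must check that all the $O(1/N)$ errors, both in the cross factors and in the estimates $P_m=\Theta(N)$, are uniform enough to be discarded. One should also record at the outset that every region occurring here is balanced, so that the tiling numbers are positive and all ratios are well defined throughout.
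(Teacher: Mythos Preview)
Your approach is essentially the paper's: apply the shuffling identity~(\ref{halfeq2}) and then show that all cross--cluster factors in the resulting product tend to $1$ as $N\to\infty$, leaving only the within--cluster contributions identified via Lemma~\ref{QAR}. The one organizational difference is that you telescope through $k$ one--cluster changes, whereas the paper applies the shuffling theorem once globally (to $U=\bigcup_i U_i$ versus $U'=\bigcup_i U'_i$, noting that $u=u'$ and $d=d'$ so the hyperfactorials cancel immediately) and then splits the single ratio $\prod_{i<j}(s_j-s_i)(s_j+s_i-1)\big/\prod_{i<j}(s'_j-s'_i)(s'_j+s'_i-1)$ into within-- and cross--cluster pieces in one stroke; your telescoping is logically equivalent but slightly less direct.
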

\begin{thm}\label{dual3}For nonnegative integer $x,y$
\begin{align}\label{dualeq3}
\lim_{N\to \infty}&\frac{\M(W_{Nx,Ny}(C_1,\dots,C_k|\ Nd_1,\dots,Nd_{k-1}))}{\M(W_{Nx,Ny}(C'_1,\dots,C'_k|\ Nd_1,\dots,Nd_{k-1}))}=\prod_{i=1}^{k}\frac{w^+(C_i)w^-(C_i)}{w^+(C'_i)w^-(C'_i)}
\end{align}
 where $w^+(C_i)=\M(\overline{L}_{2u_i,f_i-u_i}(U_i))$ and $w^-(C_i)=\M(\overline{L}_{2d_i,f_i-d_i}(D_i))$ are the tiling numbers of the quartered hexagons $\overline{L}_{2k,n}$ whose dents are defined by the up-pointing triangles and down-pointing triangles in the cluster $C_i$, and where $w^+(C'_i)=\M(\overline{L}_{2u_i,f_i-u_i}(U'_i))$ and $w^-(C'_i)=\M(\overline{L}_{2u_i,f_i-u_i}(D'_i))$ are defined similarly w.r.t. $C'_i$.
\end{thm}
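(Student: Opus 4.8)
The plan is to reduce the asymptotic statement to the exact shuffling formula \eqref{halfeq3} of Theorem \ref{halffactor2} and then carefully take the limit term by term. First I would observe that in the region $W_{Nx,Ny}(C_1,\dots,C_k\mid Nd_1,\dots,Nd_{k-1})$ the index sets of removed triangles are $U=\bigcup_i U_i$ and $D=\bigcup_i D_i$, where the $i$-th cluster sits at an absolute position determined by the prefix sums $N(d_1+\dots+d_{i-1})$ plus the internal offsets recorded in $U_i,D_i$. Writing an element of $U$ (resp. $D$) coming from cluster $i$ as $s = N\delta_i + a$ with $\delta_i := d_1+\dots+d_{i-1}$ and $a\in U_i$ a bounded internal coordinate, and similarly for $U',D'$, the ratio in \eqref{halfeq3} becomes a product over the pairs $(i,j)$ of clusters. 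Pairs with $i=j$ contribute exactly the "internal" factor
\[
\frac{\displaystyle\prod_{a<a' \in U_i}(a'-a)\displaystyle\prod_{a\le a'\in U_i}(a+a'-1)}{\displaystyle\prod_{a<a'\in U'_i}(a'-a)\displaystyle\prod_{a\le a'\in U'_i}(a+a'-1)}
= \frac{w^+(C_i)}{w^+(C'_i)},
\]
by formula \eqref{(c)} of Lemma \ref{QAR} applied with $2k=2u_i$ (the $\Hf_2$ and $2^{-u_i}$ prefactors cancel between numerator and denominator since $|U_i|=|U'_i|=u_i$), and likewise $w^-(C_i)/w^-(C'_i)$ from the down-pointing triangles; the skipping-hyperfactorial ratio $\Hf_2(2u'+2y+1)\Hf_2(2d'+2y+1)/\Hf_2(2u+2y+1)\Hf_2(2d+2y+1)$ on the right of \eqref{halfeq3} is identically $1$ because shuffling within clusters preserves the total counts $u=\sum u_i = \sum u'_i = u'$ and $d=d'$.

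The second and main step is to show that the cross-cluster factors (pairs $i\ne j$) tend to $1$ as $N\to\infty$. For $i<j$ a typical factor in the numerator is $(s'-s)$ with $s=N\delta_i+a$, $s'=N\delta_j+a'$, i.e. $N(\delta_j-\delta_i) + (a'-a)$; the corresponding factor in the denominator, built from $U'_i,U'_j$, is $N(\delta_j-\delta_i)+(a''-a''')$ for bounded $a'',a'''$. Since $\delta_j-\delta_i = d_i+\dots+d_{j-1} > 0$ is a fixed positive integer, each such factor is $N(\delta_j-\delta_i)(1+O(1/N))$, so the numerator and denominator cross-terms are asymptotically equal and their ratio is $1+O(1/N)$. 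The same estimate applies to the "sum" factors $s+s'-1 = N(\delta_i+\delta_j)+O(1)$ appearing in the $i\le j$ products of \eqref{halfeq3}, noting that the diagonal $i=j$ part of $\prod_{i\le j}$ was already absorbed into $w^\pm(C_i)$ above and the off-diagonal $i<j$ part is again $N(\delta_i+\delta_j)(1+O(1/N))$ in both numerator and denominator. Multiplying the finitely many (at most $\binom{k}{2}$-many, each of bounded internal size) such ratios together still gives $1+O(1/N)\to 1$. Collecting the surviving $i=j$ contributions yields exactly $\prod_{i=1}^k \frac{w^+(C_i)w^-(C_i)}{w^+(C'_i)w^-(C'_i)}$, which is \eqref{dualeq3}.

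The step I expect to require the most care is the bookkeeping that matches, \emph{pair by pair}, a cross-cluster factor of the $U$-numerator with one of the $U'$-denominator: this uses crucially that shuffling only permutes positions \emph{within} $U_i\Delta D_i$, so $|U_i|=|U'_i|$ for every $i$ and hence the multiset of cross-cluster pairs $(i,j)$ is the same on both sides, with only bounded perturbations in the $O(1)$ internal offsets. A clean way to organize this is to factor the right-hand side of \eqref{halfeq3} as $\prod_i(\text{diagonal})\cdot\prod_{i<j}(\text{off-diagonal})$ before taking $N\to\infty$, handle the diagonal via Lemma \ref{QAR}\eqref{(c)} exactly, and bound each off-diagonal ratio by $1+C/N$ uniformly. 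One should also record the trivial but necessary remark that $W_{Nx,Ny}$ and $W_{Nx,Ny}$ with the primed clusters both admit tilings for all large $N$ (the barrier/triangle counts are preserved under shuffling and the hexagon is large), so all tiling numbers in sight are positive and the ratios are well defined; this is where the hypothesis $|B|\le x$, i.e. $\sum|B_i|\le Nx$ for large $N$, is used to invoke Theorem \ref{halffactor2}. Theorems \ref{dual1} and \ref{dual2} follow by the identical argument with \eqref{halfeq1}, \eqref{(a)} and \eqref{halfeq2}, \eqref{(b)} in place of \eqref{halfeq3}, \eqref{(c)}.
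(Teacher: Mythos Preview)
Your overall approach mirrors the paper's (which proves Theorem~\ref{dual1} in full and declares the other three ``essentially the same''): apply the exact shuffling identity \eqref{halfeq3}, observe that the $\Hf_2$-factors cancel because the per-cluster shuffling forces $u=u'$ and $d=d'$, split the remaining product over pairs of indices according to cluster membership, and show that the cross-cluster ratios tend to~$1$. Your treatment of the off-diagonal pairs, and the observation that $|U_i|=|U'_i|$ makes the cross pairs match up one-for-one, are correct and in fact more explicit than what the paper writes.

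The step that does not go through as written is your identification of the diagonal ($i=j$) contribution with $w^{+}(C_i)/w^{+}(C'_i)$. You compute the within-cluster factor using the \emph{local} coordinates $a,a'\in U_i$, but the entries appearing in \eqref{halfeq3} are the \emph{absolute} positions $s=N\delta_i+O(1)$. For the difference factors this is harmless, since $s'-s=a'-a$ exactly; but each sum factor is $s+s'-1=2N\delta_i+O(1)$, not $a+a'-1$. Thus for every cluster with $\delta_i>0$ the diagonal sum-type factors behave just like your cross-cluster factors---their numerator/denominator ratio is $1+O(1/N)$---and what survives from such a cluster in the limit is only the Cohn--Larsen--Propp type ratio $\prod_{p<q}(a_q-a_p)\big/\prod_{p<q}(a'_q-a'_p)$, \emph{not} the $\overline{L}_{2u_i}$-ratio of Lemma~\ref{QAR}\eqref{(c)}. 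Only the cluster touching the west staircase (where $\delta_1=0$ and absolute~$=$~local) genuinely yields $w^{+}(C_1)/w^{+}(C'_1)$. The paper's own sketch elides this same distinction; either way, the diagonal bookkeeping has to separate the boundary cluster from the interior ones before one can claim the stated right-hand side.
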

\begin{thm}\label{dual4}For nonnegative integer $x,y$
\begin{align}\label{dualeq4}
\lim_{N\to \infty}&\frac{\M(\overline{W}_{Nx,Ny}(C_1,\dots,C_k|\ Nd_1,\dots,Nd_{k-1}))}{\M(\overline{W}_{Nx,Ny}(C'_1,\dots,C'_k|\ Nd_1,\dots,Nd_{k-1}))}=\prod_{i=1}^{k}\frac{\overline{w}^+(C_i)\overline{w}^-(C_i)}{\overline{w}^+(C'_i)\overline{w}^-(C'_i)}
\end{align}
 where $\overline{w}^+(C_i)=\M(\overline{L}_{2u_i-1,f_i-u_i}(U_i))$ and $\overline{w}^-(C_i)=\M(\overline{L}_{2d_i-1,f_i-d_i}(D_i))$ are the tiling numbers of the quartered hexagons $\overline{L}_{2k-1,n}$ whose dents are defined by the up-pointing triangles and down-pointing triangles in the cluster $C_i$, and where $\overline{w}^+(C'_i)=\M(\overline{L}_{2u_i-1,f_i-u_i}(U'_i))$ and $\overline{w}^-(C'_i)=\M(\overline{L}_{2u_i-1,f_i-u_i}(D'_i))$ are defined similarly w.r.t. $C'_i$.
\end{thm}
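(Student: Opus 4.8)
The plan is to derive Theorem~\ref{dual4} as a limiting case of the exact shuffling identity \eqref{halfeq4} in Theorem~\ref{halffactor2}. First I would note that since all obstacles in $\overline{W}_{Nx,Ny}(C_1,\dots,C_k|\ Nd_1,\dots,Nd_{k-1})$ lie in the clusters $C_1,\dots,C_k$ and we only reshuffle within each $C_i$, the full position sets $U,D$ (and $U',D'$) for the two regions still satisfy $U\cup D = U'\cup D'$, $U\cap D = U'\cap D'$ and $b=|B|\le Nx/2$ once $N$ is large, so \eqref{halfeq4} applies verbatim. The ratio of tiling numbers is therefore
\begin{equation}
\frac{\displaystyle\prod_{1\leq i <j\leq u}(t^{(U)}_j-t^{(U)}_i)(t^{(U)}_j+t^{(U)}_i-2)}{\displaystyle\prod_{1\leq i <j\leq u'}(\cdots)}\cdot(\text{same for } D)\cdot\frac{\Hf_2(2u'+2Ny)\Hf_2(2d'+2Ny)}{\Hf_2(2u+2Ny)\Hf_2(2d+2Ny)},
\end{equation}
where $u=\sum u_i$, $d=\sum d_i$ are unchanged under shuffling (each cluster preserves its counts), hence $u=u'$, $d=d'$ and the entire skipping-hyperfactorial factor is identically $1$, for every $N$. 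So the limit equals the limit of the remaining product over pairs of removed-triangle positions.

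The second step is to split that product over pairs of positions into intra-cluster pairs (both in the same $C_i$) and inter-cluster pairs (in $C_i$ and $C_j$, $i<j$). For an inter-cluster pair, say an up-triangle at position $p$ in $C_i$ and one at position $q$ in $C_j$, shuffling changes $p$ and $q$ by $O(1)$ while the gap $q-p$ grows like $N(d_i+\dots+d_{j-1})+O(1)\to\infty$; hence the factor $(q-p)(q+p-2)$ in numerator and the corresponding factor in denominator are each $N\cdot(\text{const})+O(1)$, so their ratio tends to $1$. The same holds for every inter-cluster pair and for mixed up/down reasoning is not needed since the product in \eqref{halfeq4} only pairs $U$-positions with $U$-positions and $D$-positions with $D$-positions. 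Thus all inter-cluster contributions wash out in the limit, and what survives is exactly the product over $i$ of the intra-cluster ratios
\begin{equation}
\prod_{i=1}^{k}\left(\frac{\displaystyle\prod_{\substack{1\le a<b\le u_i}}(U_{i,b}-U_{i,a})(U_{i,b}+U_{i,a}-2)}{\displaystyle\prod_{\substack{1\le a<b\le u_i}}(U'_{i,b}-U'_{i,a})(U'_{i,b}+U'_{i,a}-2)}\cdot\frac{\displaystyle\prod_{\substack{1\le a<b\le d_i}}(D_{i,b}-D_{i,a})(D_{i,b}+D_{i,a}-2)}{\displaystyle\prod_{\substack{1\le a<b\le d_i}}(D'_{i,b}-D'_{i,a})(D'_{i,b}+D'_{i,a}-2)}\right).
\end{equation}

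The third step is to recognize each intra-cluster factor as a ratio of quartered-hexagon tiling numbers via Lemma~\ref{QAR}: formula \eqref{(d)} gives $\M(\overline{L}_{2k-1,n}(a_1,\dots,a_k)) = \Hf_2(2k)^{-1}\prod_{i<j}(a_j-a_i)(a_i+a_j-2)$, whose dependence on the $a$'s is precisely $\prod_{i<j}(a_j-a_i)(a_i+a_j-2)$. Taking $a_\bullet$ to be the (suitably normalized) positions of the up-triangles of $C_i$ relative to the cluster's left end — the positions at which they sit as dents along the base of a quartered hexagon of the appropriate size — identifies the first intra-cluster numerator with $\overline{s}^+(C_i)\cdot\Hf_2(2u_i)$ and likewise for the other three factors, so the $\Hf_2$ prefactors cancel between $C_i$ and $C'_i$ (they depend only on $u_i$, resp. $d_i$, which are shuffle-invariant) and each cluster contributes exactly $\overline{w}^+(C_i)\overline{w}^-(C_i)/(\overline{w}^+(C'_i)\overline{w}^-(C'_i))$. (Here one must be slightly careful to use the correct $n$-parameter $f_i-u_i$ in the $\overline{L}$ notation; since formula \eqref{(d)}'s dependence on the $a$'s is independent of $n$, the precise value of $n$ only affects the constant prefactor, which cancels.) Assembling the three steps yields \eqref{dualeq4}.

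I expect the main obstacle to be the bookkeeping in Step~3: matching the "absolute" positions appearing in \eqref{halfeq4} (indices in $[x+y+n]$, measured from the west vertex of the whole halved hexagon) with the "local" dent positions of the quartered hexagon $\overline{L}_{2u_i-1,\,f_i-u_i}$ attached to cluster $C_i$, and verifying that the translation by the cumulative offset $N(d_1+\dots+d_{i-1})+\dots$ cancels in the ratio because the quantities $(a_j-a_i)$ and $(a_i+a_j-2)$ are affine-covariant under a common shift of all the $a$'s only for the difference factor, while for the $(a_i+a_j-2)$ factor the shift does \emph{not} cancel — so in fact one needs the reshuffling to fix not just the multiset sizes but the positions relative to the cluster's anchor point on the base, which is exactly how $C'_i$ is defined (shuffling within $U_i\Delta D_i$ while "keeping the orientations put" and keeping the cluster anchored). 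Making this compatibility precise, and confirming that the $O(1)$ discrepancies in the inter-cluster factors really are negligible against the $\Theta(N)$ main terms uniformly, is the technical heart of the argument; the rest is a direct substitution into Lemma~\ref{QAR}. The other three theorems (\ref{dual1}, \ref{dual2}, \ref{dual3}) follow by the identical scheme, using \eqref{halfeq1}, \eqref{halfeq2}, \eqref{halfeq3} together with \eqref{(a)}, \eqref{(b)}, \eqref{(c)} respectively.
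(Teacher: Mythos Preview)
Your approach is essentially the same as the paper's: the paper proves only Theorem~\ref{dual1} in detail (applying the exact shuffling identity, observing that $u=u'$ and $d=d'$ so the $\Hf_2$ factors cancel, splitting the resulting product into intra- and inter-cluster pairs, showing that inter-cluster ratios tend to $1$, and identifying the surviving intra-cluster ratios with quartered-hexagon tiling numbers via Lemma~\ref{QAR}) and declares that Theorems~\ref{dual2}--\ref{dual4} follow by the identical scheme with the appropriate variants of the shuffling identity and of Lemma~\ref{QAR}. Your three steps reproduce this exactly, and your caution in Step~3 about the non-translation-invariance of the $(a_i+a_j-2)$ factors is a point the paper's own argument passes over without comment.
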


Theorem \ref{dual1} can be visualized as in Fig. \ref{geointer}, for $k=3$ (the quartered hexagons corresponding to $s^+(C_i)$ and $s^-(C_i)$ are the upper and lower halves of the `numerator hexagon' in the $i$-th fraction on the right-hand side; the  quartered hexagons corresponding to $s^+(C'_i)$ and $s^-(C'_i)$ are the upper and lower halves of the `denominator hexagon' in the $i$-th fraction, for $i=1,2,3$).
\begin{figure}\centering
\includegraphics[width=15cm]{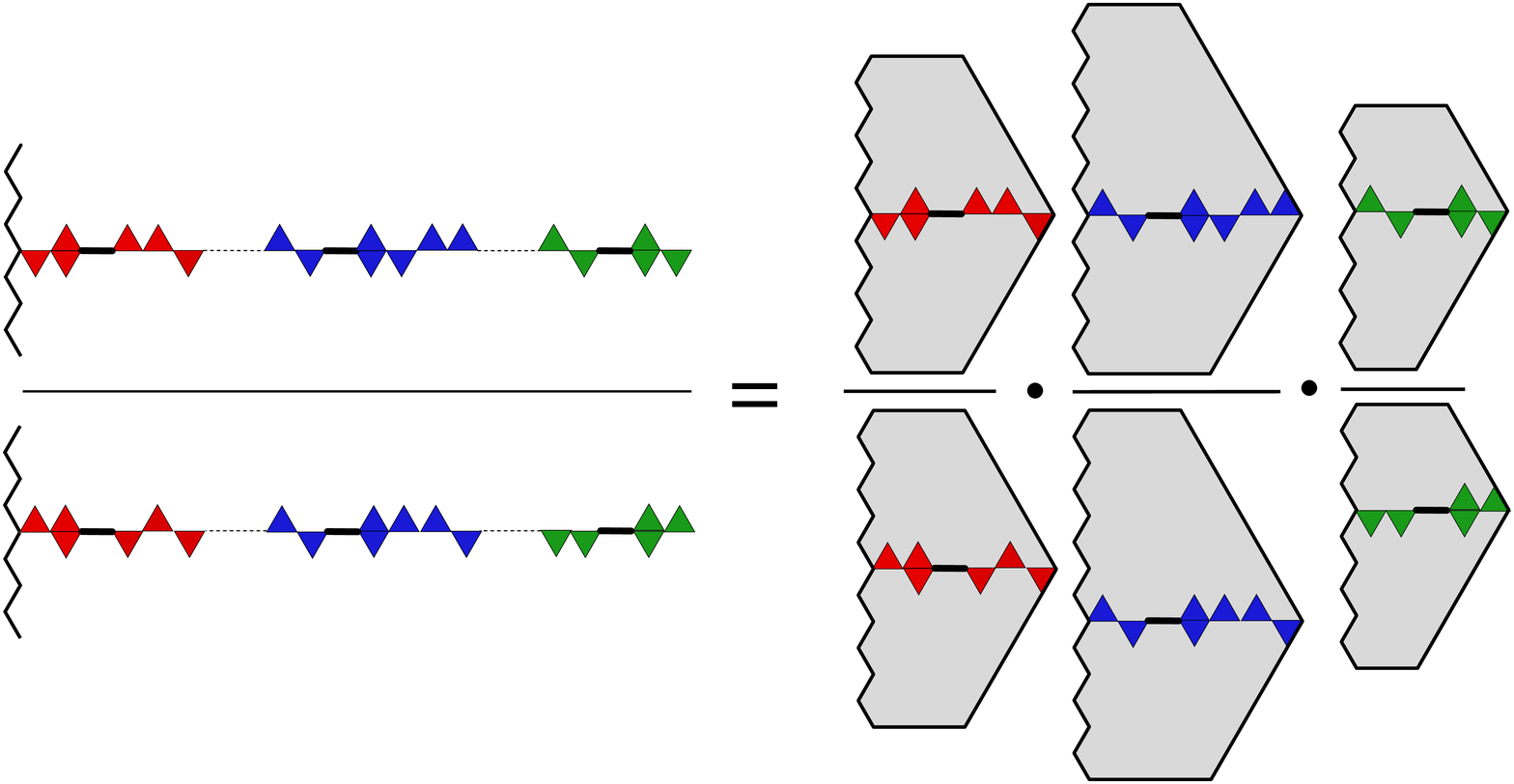}
\caption{Illustrating Theorem \ref{dual1}.}\label{geointer}
\end{figure}
We note that the first two asymptotic results of this type were introduced by Ciucu and Krattenthaler \cite{CK, Ciu1} as counterparts of MacMahon's classical theorem \cite{Mac}.

As the proof of the above four theorems are essentially the same, we only show here the proof of Theorem \ref{dual1}.
\begin{proof}
Assume further that the clusters $C_i$ and $C'_i$ both contain $u_i$ up-pointing triangles, $d_i$ down-pointing triangles, and $b_i$ barriers. Denote by $U_i=\{a^{(i)}_1,\dots,a^{(i)}_{u_i}\}$ the position set of up-pointing triangles in $C_i$ and $U'_i=\{e^{(i)}_1,\dots,e^{(i)}_{u_i}\}$ the corresponding position set in $C'_i$. Applying Theorem \ref{halffactor1} to the regions $F_{Nx,Ny}(U; D;B)$ and $F_{Nx,Ny}(U'; D';B)$, for $U:=\bigcup_{i=1}^{k}U_i$, $D:=\bigcup_{j=1}^{k}D_j$, $U':=\bigcup_{i=1}^{k}U'_i$,$D':=\bigcup_{j=1}^{k}D'_j$, $B=\bigcup_{i=1}^{k} B_i$, we get
\begin{equation}
\frac{\M(F_{Nx,Ny}(U;D;B))}{\M(F_{Nx,Ny}(U';D';B))}=\frac{\M(F_{Nx,Ny}(F_1,\dots,F_k|\ Nd_1,\dots,Nd_{k-1}))}{\M(F_{Nx,Ny}(F'_1,\dots,F'_k|\ Nd_1,\dots,Nd_{k-1}))}=\frac{\Delta(U)\Delta(D)}{\Delta(U')\Delta(D')},
\end{equation}
where, for an ordered set $S=\{0<s_1<s_2<\cdots<s_k\}$, the operator $\Delta$ is defined as $\Delta(S):=\prod_{1\leq i <j \leq k} (s_j^2-s_i^2)$. We observe that  $\Delta(S)^2=\prod_{1\leq i \not=j \leq k} |s_j^2-s_i^2|$.

We have
 \[\frac{\Delta(U)^2}{\Delta(U')^2}=\prod_{1\leq i,j\leq k}\prod_{p=1}^{u_i}\prod_{q=1}^{u_j}\frac{|(a^{(i)}_p)^2-(a^{(j)}_q)^2|}{|(e^{(i)}_p)^2-(e^{(j)}_q)^2|},\]
 where $p\not=q$ if $i=j$.
 We note that each  fraction $\frac{|(a^{(i)}_p)^2-(a^{(j)}_q)^2|}{|(e^{(i)}_p)^2-(e^{(j)}_q)^2|}$ on the right-hand side tends to 1 if $i\not=j$, as $N$ tends to the infinity. Thus, if we consider the limit, only the fractions of the form $\frac{|(a^{(i)}_p)^2-(a^{(i)}_q)^2|}{|(e^{(i)}_p)^2-(e^{(i)}_q)^2|}$ remain on the right-hand side.
Thus, $\frac{\Delta(U)^2}{\Delta(U')^2}$ tends to
\[\prod_{i=1}^{k}\prod_{1\leq p\not= q\leq u_i}\frac{|(a^{(i)}_p)^2-(a^{(i)}_q)^2|}{|(e^{(i)}_p)^2-(e^{(i)}_q)^2|}=\prod_{i=1}^{k}\frac{\Delta(U_i)^2}{\Delta(U'_i)^2}=\prod_{i=1}^{k}\frac{s^+(C_i)^2}{s^+(C'_i)^2}.\]
Thus, $\frac{\Delta(U)}{\Delta(U')}$ tends to $\prod_{i=1}^{k}\frac{s^+(C_i)}{s^+(C'_i)}$. Similarly, $\frac{\Delta(D)}{\Delta(D')}$ tends to $\prod_{i=1}^{k}\frac{s^-(C_i)}{s^-(C'_i)}$. This finishes the proof.
\end{proof}

\section{Applications to enumerations of halved hexagons with ferns removed}

As discussed in \cite{shuffling}, if we restrict ourself to the special case when the up-index set and down-index set are disjoint and there is no barrier, i.e. $U\cap D=\emptyset$ and $B=\emptyset$. The our doubly-dented hexagons are (tiling-)equinumerous with the hexagons with ferns removed.  This ways, our shuffling theorems for halved hexagons (Theorems \ref{halffactor1} and \ref{halffactor2}) can be related to enumerations of halved hexagons with `ferns' removed investigated by the author in \cite{Halfhex1, Halfhex2, Halfhex3}. A \emph{fern} is a chain of equilateral triangles of alternating orientations. We  also refer the reader to e.g. \cite{Ciu1,Twofern, Threefern, Threefern2} for further discussion of the fern structure.

There are four families of halved hexagons, $F$-, $\overline{F}$-,$W$ and $\overline{W}$-types,  to investigate. However, we only focus on the $F$-type halved hexagons in this section. The investigation of other three is left to the reader.

Assume that the set of removed unit triangles in the region $F_{x,y}(U;D;\emptyset)$ is partitioned into $k$ clusters $C_1,C_2,\dots,C_k$, in which the distances between two consecutive ones are $d_1,d_2,\dots, d_{k-1}$ from left to right. Similar  to the previous section, we use the notation $F_{x,y}(C_1,\dots,C_k |\ d_1,\dots,d_{k-1})$ for our region, the only difference is that $C_i$ now contains no barrier and $U_i\cap D_i=\emptyset$, for $i=1,2,\dots,k$.

Since $U\cap D=\emptyset$, each cluster $C_i$ is partitioned further to maximal  intervals of unit triangles of the same orientation. We call these interval \emph{up-intervals} or \emph{down-intervals} based on the orientation of their unit triangles.  By removing forced lozenges along each up- and down-interval in the cluster $C_i$ if it contains at least 2 triangles, we get a fern  $F_i$. Denote by $Q_{x,y}(F_1,\dots,F_k|\ d_1,\dots,d_{k-1})$ the resulting hexagon with ferns removed  (see Figure \ref{reflectforced}; the forced lozenges are the vertical white ones and the $Q$-type region is the shaded one).

\begin{figure}\centering
\includegraphics[width=8cm]{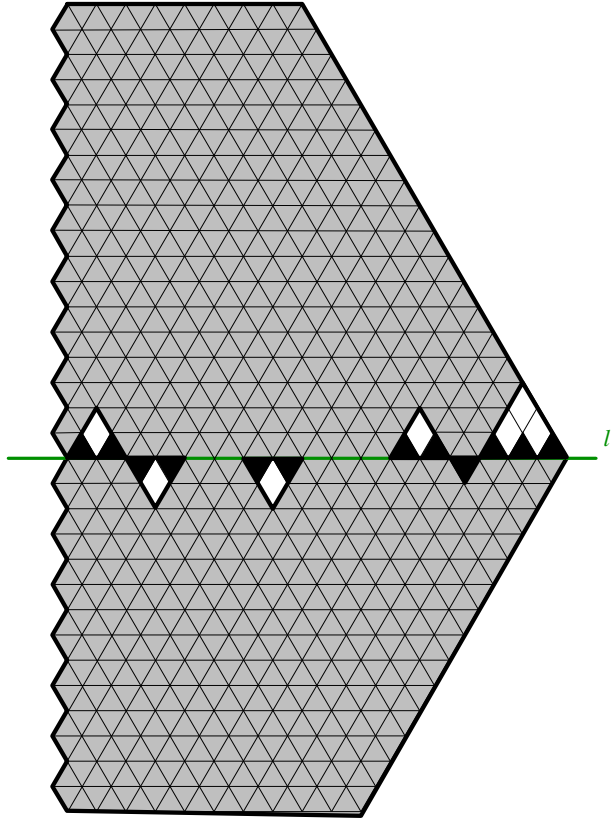}
\caption{Obtaining a halved hexagon with ferns removed $Q_{x,y}(F_1,\dots,F_k|\ d_1,\dots,d_{k-1})$ from the region $F_{x,y}(C_1,\dots,C_k |\ d_1,\dots,d_{k-1})$.}\label{reflectforced}
\end{figure}

\begin{figure}\centering
\setlength{\unitlength}{3947sp}%
\begingroup\makeatletter\ifx\SetFigFont\undefined%
\gdef\SetFigFont#1#2#3#4#5{%
  \reset@font\fontsize{#1}{#2pt}%
  \fontfamily{#3}\fontseries{#4}\fontshape{#5}%
  \selectfont}%
\fi\endgroup%
\resizebox{15cm}{!}{
\begin{picture}(0,0)%
\includegraphics{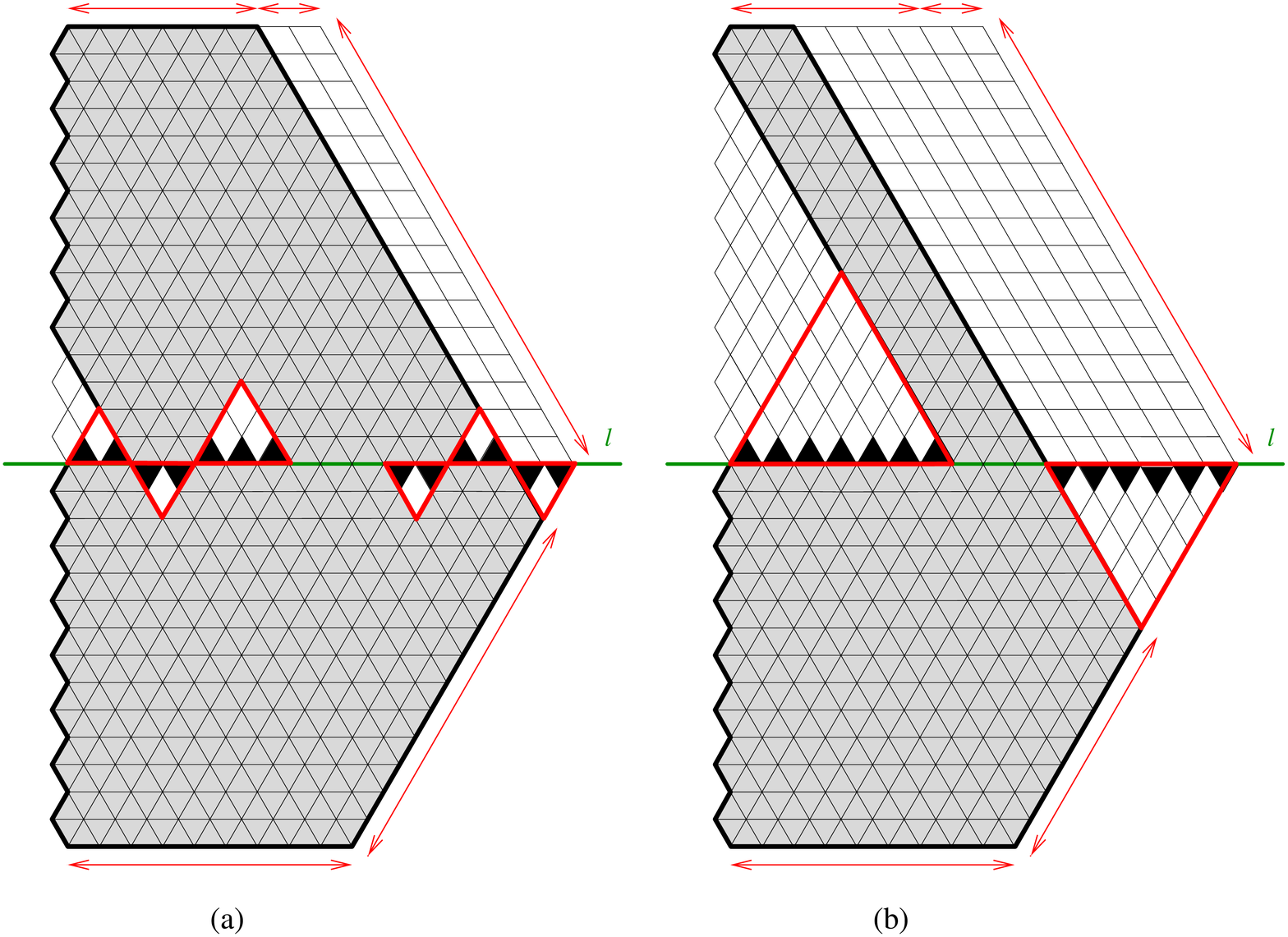}%
\end{picture}%
%
%

\begin{picture}(16651,12559)(980,-16778)
\put(13205,-4574){\makebox(0,0)[lb]{\smash{{\SetFigFont{20}{24.0}{\rmdefault}{\mddefault}{\itdefault}{\color[rgb]{1,0,0}$z$}%
}}}}
\put(11450,-16170){\makebox(0,0)[lb]{\smash{{\SetFigFont{20}{24.0}{\rmdefault}{\mddefault}{\itdefault}{\color[rgb]{1,0,0}$x+a_1+a_3+b_1$}%
}}}}
\put(14914,-15342){\rotatebox{60.0}{\makebox(0,0)[lb]{\smash{{\SetFigFont{20}{24.0}{\rmdefault}{\mddefault}{\itdefault}{\color[rgb]{1,0,0}$2y+z+b_1+b_2$}%
}}}}}
\put(7929,-11056){\makebox(0,0)[lb]{\smash{{\SetFigFont{20}{24.0}{\rmdefault}{\mddefault}{\itdefault}{\color[rgb]{1,0,0}$z$}%
}}}}
\put(7021,-10449){\makebox(0,0)[lb]{\smash{{\SetFigFont{20}{24.0}{\rmdefault}{\mddefault}{\itdefault}{\color[rgb]{1,0,0}$b_1$}%
}}}}
\put(6196,-11086){\makebox(0,0)[lb]{\smash{{\SetFigFont{20}{24.0}{\rmdefault}{\mddefault}{\itdefault}{\color[rgb]{1,0,0}$b_2$}%
}}}}
\put(3969,-10149){\makebox(0,0)[lb]{\smash{{\SetFigFont{20}{24.0}{\rmdefault}{\mddefault}{\itdefault}{\color[rgb]{1,0,0}$a_3$}%
}}}}
\put(2956,-11049){\makebox(0,0)[lb]{\smash{{\SetFigFont{20}{24.0}{\rmdefault}{\mddefault}{\itdefault}{\color[rgb]{1,0,0}$a_2$}%
}}}}
\put(2116,-10434){\makebox(0,0)[lb]{\smash{{\SetFigFont{20}{24.0}{\rmdefault}{\mddefault}{\itdefault}{\color[rgb]{1,0,0}$a_1$}%
}}}}
\put(2589,-4591){\makebox(0,0)[lb]{\smash{{\SetFigFont{20}{24.0}{\rmdefault}{\mddefault}{\itdefault}{\color[rgb]{1,0,0}$x+a_2+b_2$}%
}}}}
\put(6459,-6376){\rotatebox{300.0}{\makebox(0,0)[lb]{\smash{{\SetFigFont{20}{24.0}{\rmdefault}{\mddefault}{\itdefault}{\color[rgb]{1,0,0}$2y+2a_1+2a_3+2b_1$}%
}}}}}
\put(6984,-14169){\rotatebox{60.0}{\makebox(0,0)[lb]{\smash{{\SetFigFont{20}{24.0}{\rmdefault}{\mddefault}{\itdefault}{\color[rgb]{1,0,0}$2y+z+2a_2+2b_2$}%
}}}}}
\put(2865,-16169){\makebox(0,0)[lb]{\smash{{\SetFigFont{20}{24.0}{\rmdefault}{\mddefault}{\itdefault}{\color[rgb]{1,0,0}$x+a_1+a_3+b_1$}%
}}}}
\put(4620,-4573){\makebox(0,0)[lb]{\smash{{\SetFigFont{20}{24.0}{\rmdefault}{\mddefault}{\itdefault}{\color[rgb]{1,0,0}$z$}%
}}}}
\put(15242,-11563){\makebox(0,0)[lb]{\smash{{\SetFigFont{20}{24.0}{\rmdefault}{\mddefault}{\itdefault}{\color[rgb]{1,0,0}$z+b_1+b_2$}%
}}}}
\put(11064,-10006){\makebox(0,0)[lb]{\smash{{\SetFigFont{20}{24.0}{\rmdefault}{\mddefault}{\itdefault}{\color[rgb]{1,0,0}$a_1+a_2+a_3$}%
}}}}
\put(11174,-4592){\makebox(0,0)[lb]{\smash{{\SetFigFont{20}{24.0}{\rmdefault}{\mddefault}{\itdefault}{\color[rgb]{1,0,0}$x+a_2+b_2$}%
}}}}
\put(15044,-6377){\rotatebox{300.0}{\makebox(0,0)[lb]{\smash{{\SetFigFont{20}{24.0}{\rmdefault}{\mddefault}{\itdefault}{\color[rgb]{1,0,0}$2y+2a_1+2a_3+2b_1$}%
}}}}}
\end{picture}}
\caption{(a) Obtaining a $H^{(1)}$-type region in  \cite[Theorem 2.4]{Halfhex3} by removing forced lozenges. (b) Obtaining a halved hexagon with a semi-triangle removed in \cite[Proposition 2.1]{Ciucu1} by removing forced lozenges.}\label{reflectforced2}
\end{figure}

In the view of this, one can view Theorem \ref{halffactor1} as a common generalization of main results in \cite{Ranjan1,Halfhex1,Halfhex2,Halfhex3} as follows.  

We will show that one can use Theorem \ref{halffactor1} to obtain known enumerations of halved hexagons with ferns removed. The idea is picking suitable index sets $U,D,U',D'$  such that the regions in the numerator on the left-hand side of (\ref{halfeq1}) is  what we want to enumerate, and that the region in the denominator becomes a known region after removing certain forced lozenges.

Let us show in detail the implication to the main result in \cite{Halfhex3} (Theorem 2.4). We now pick the index sets $U,D,U',D'$ such that:
\begin{enumerate}
\item $k=2$,
\item the fern $F_1$ corresponding to the cluster $C_1$ contains $m$ triangles of side-lengths $a_1,a_2,\dots,a_m$ from left to right, starting by an up-pointing triangles,
\item the fern $F_2$ corresponding to the cluster $C_2$ contains $n+1$ triangles of side-length $z,b_1,b_2,\dots,b_n$ from right to left, starting by a down-pointing triangles,
\item the fern $F'_1$ corresponding to $C'_1$ consists of a single up-pointing triangle, 
\item the fern $F'_2$ corresponding to $C'_2$ consists of all a single down-pointing triangle.
\end{enumerate}
  This way, after removing forced lozenges, the region $F_{x,y}(U;D;\emptyset)=F_{x,y}(C_1,C_2|\ x+y)$ becomes the halved hexagon with two ferns removed $H^{(1)}_{x,y,z}(a_1,a_2,\dots,a_m; b_1,b_2,\dots,b_n)$ in \cite[Theorem 2.4]{Halfhex3}; while the region $F_{x,y}(U';D';\emptyset)=F_{x,y}(C'_1,C'_2|\ x+y)$ becomes a halved hexagon with a semi-triangle removed from the west side, that is previously enumerated in \cite[Proposition 2.1]{Ciucu1} (see illustration in Figure \ref{reflectforced2}; the ferns are indicated by chains of triangles with bold boundaries; the regions in \cite[Theorem 2.4]{Halfhex3} and in \cite[Proposition 2.1]{Ciucu1} are illustrated by the shaded ones in figures (a) and (b), respectively). Thus, our Theorem \ref{halffactor1} gives a simple product formula for the number of tilings of $H^{(1)}_{x,y,z}(a_1,a_2,\dots,a_m; b_1,b_2,\dots,b_n)$ and implies Theorem 2.4 in \cite{Halfhex3}.

If we specialize further by setting either $F_1=F'_1=\emptyset$ or $F'_2=F_2$ consisting of a single down-pointing triangle of side-length $z$, we get back the main results in \cite{Halfhex1} and \cite{Halfhex2}, respectively. Moreover, if we specialize even further by setting $F_1=F'_1=\emptyset$ and $F_2$ consists of only two triangles, then we get the main result in Rohatgi's paper \cite{Ranjan1}.

\section{Proofs of the main theorems}

\begin{figure}\centering
\includegraphics[width=12cm]{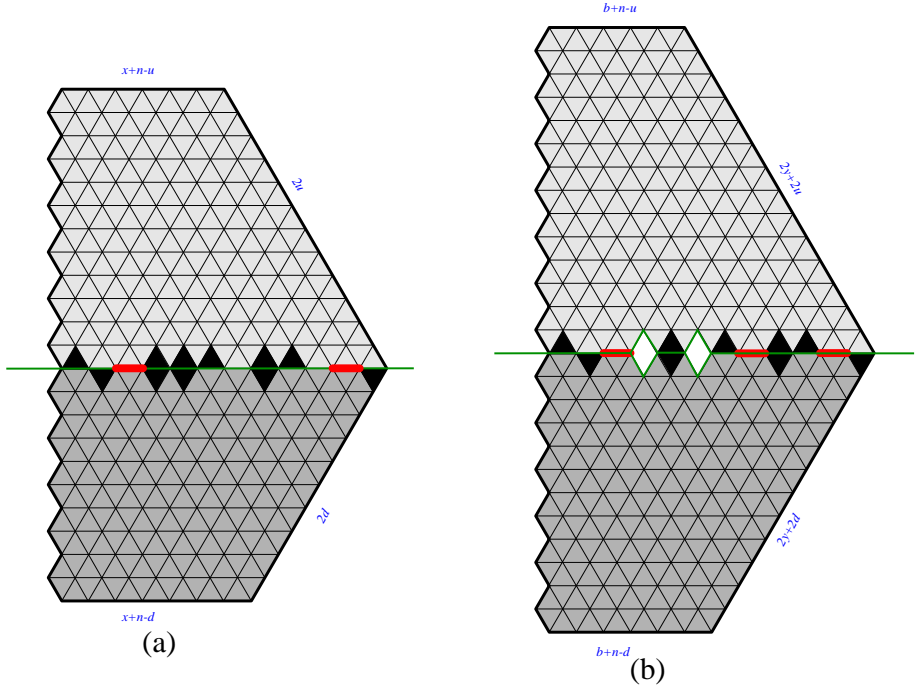}
\caption{Base cases: (a) $y=0$ and (b) $x=2b$}\label{basecase}
\end{figure}

\begin{figure}\centering
\includegraphics[width=10cm]{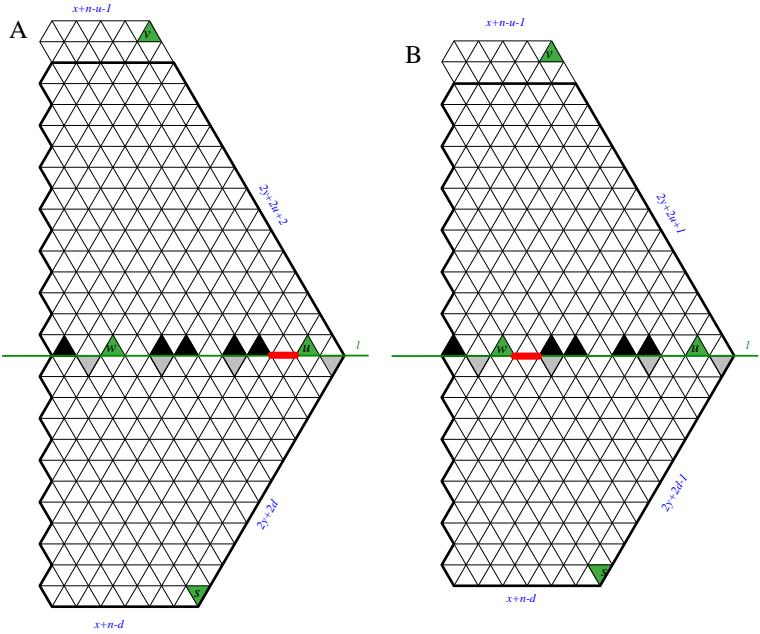}
\caption{How to apply Kuo condensation to a halved hexagon.}\label{halvedhexkuo1}
\end{figure}
\begin{figure}\centering
\includegraphics[width=9cm]{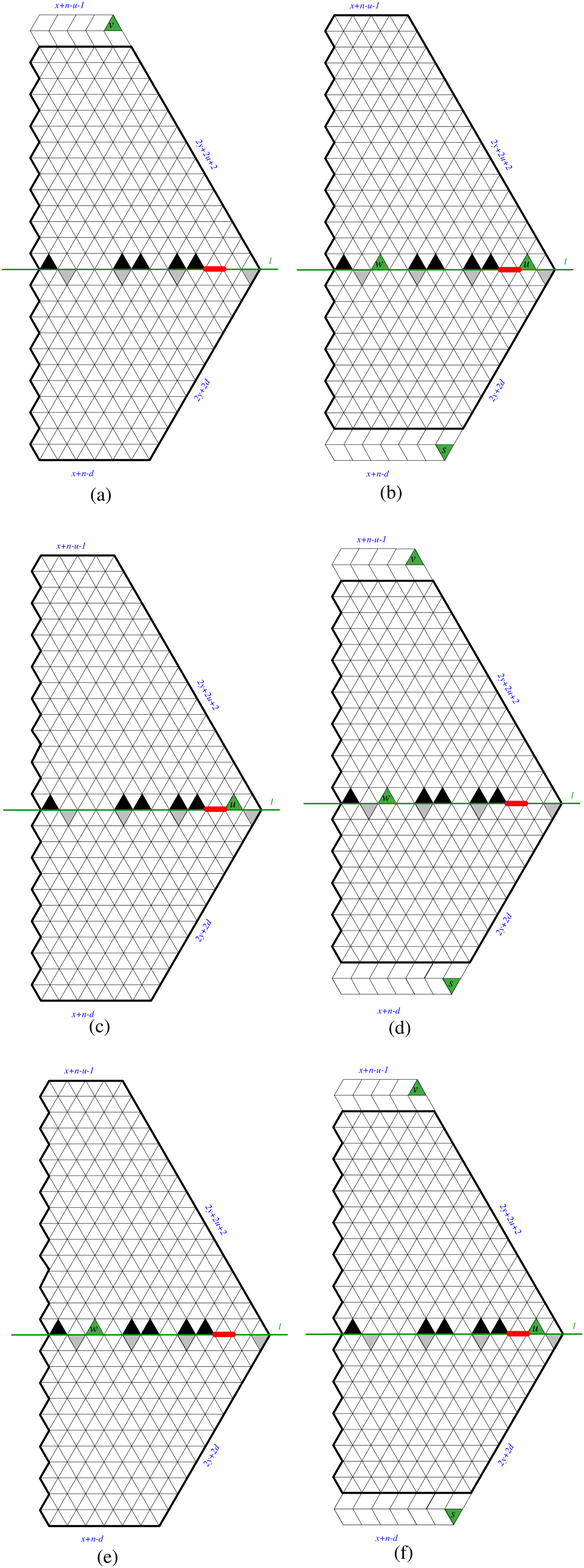}
\caption{Obtaining the recurrence for halved hexagons.}\label{halvedhexkuo2}
\end{figure}

\begin{proof}[Proof of Theorem \ref{halffactor1}]
We prove first the identity (\ref{halfeq1}) for the halved hexagon $F_{x,y}(U; D; B)$.  We prove by induction on $x+y$ with the base cases are $x=b$ and $y=0$.

If $y=0$, then we apply Region--splitting Lemma \ref{RS} to the region $R=F_{x,0}(U;D;B)$ in which the subregion $Q$ is its portion above the horizontal axis $l$. The subregion $Q$ is congruent with the quartered hexagon $L_{2u,x+n-u}(U)$ , and 
 its complement $R-Q$, after reflected over the horizontal axis $l$, is congruent with $L_{2u,b+n-d}(D)$ (see Figure \ref{basecase}(a)). In particular, we have
\[\M(F_{x,0}(U;D;B))=\M(L_{2u,x+n-u}(U))\M(L_{2u,b+n-d}(D)).\]
Similarly, we also get
\[\M(F_{x,0}(U';D';B))=\M(L_{2u',x+n-u'}(U'))\M(L_{2u',b+n-d'}(D')).\] Then (\ref{halfeq1}) follows directly from Lemma \ref{QAR}.

If $x=b$, we apply Region--splitting Lemma \ref{RS}, to  the region $R=F_{b,y}(U;D;B)$ in which the subregion $Q$ is the portion above the horizontal axis $l$ with the unit triangles at the positions in $(B\cup U\cup D)^c$ removed (we use the notation $S^{c}$ for the complement $[x+y+n]\setminus S$ of the index set $S\subseteq[x+y+n]$). $Q$ is congruent with the quartered hexagon $L_{2y+2u,b+n-u}((B\cup U\cup D)^c\cup U)$, and $R-Q$, after removing forced lozenges and reflected over $l$, is congruent with $L_{2y+2u,b+n-d}((B\cup U\cup D)^c\cup D$ (see Figure \ref{basecase}(b)). More precisely, we have
\[\M(F_{b,y}(U;D;B))=\M(L_{2y+2u,b+n-u}((B\cup U\cup D)^c\cup U))\M(L_{2y+2u,b+n-d}((B\cup U\cup D)^c\cup D)).\]
Process similarly for the region $R'=F_{b,y}(U';D';B)$, we get
\[\M(F_{b,y}(U';D';B))=\M(L_{2y+2u,b+n-u}((B\cup U'\cup D')^c\cup U'))\M(L_{2y+2u,b+n-d}((B\cup U'\cup D')^c\cup D')),\]
and (\ref{halfeq1}) follows again from Lemma \ref{QAR}, after performing a straight forward simplification.

\medskip

For the induction step, we assume that $x>b$, $y>0$ and that (\ref{halfeq1}) holds for any $F$-type regions whose sum of $x$- and $y$-parameters is strictly less than $x+y$. We will use Kuo condensation in Theorem \ref{kuothm} to obtain a recurrence for the left-hand side of (\ref{halfeq1}) and we show that the expression on the right-hand side satisfies the same recurrence, and then (\ref{halfeq1}) follows from the induction principle.

\medskip

We first apply Kuo condensation in Theorem \ref{kuothm} to the dual graph $G$ of the region $R$ obtained from $F_{x,y}(U;D;B)$ by adding to its top two layers of unit triangles as in Figure \ref{halvedhexkuo1}(a) (the region restricted by the bold contour indicate the region $F_{x,y}(U;D;B)$). The choice of the four vertices $u,v,w,s$  is shown by the corresponding unit triangles (i.e. the one with the same label) in the region $R$. In particular, we the $v$- and $s$-triangles are respectively the up-pointing  and down-pointing shaded unit triangles on the upper-right and lower-right corners of the region. The $u$- and $w$-triangles are placed on the horizontal axis $l$ at the first and the last positions  in $(U\cup D \cup B)^c$. Assume, in the rest of this proof, these two positions are $\alpha$ and $\beta$ ($\alpha< \beta$).

 Let us consider the region corresponding with the graph $G-\{v\}$. The removal of the $v$-triangle yields several forced lozenges on the top two layers of unit triangles. After removing these forced lozenges, we get back the region $F_{x,y}(U;D;B)$ (see Figure \ref{halvedhexkuo2}(a)). We note that in the unweighted case (as in the case of the region $F_{x,y}(U;D;B)$), the removal of forced lozenges dow \emph{not} change the tiling number of the region. This means that we have
\begin{equation}\label{halfeq1a}
\M(G-\{v\})=\M(F_{x,y}(U;D;B)).
\end{equation}
Considering forced lozenges as in Figures \ref{halvedhexkuo2}(b)--(f), we get five more identities
\begin{equation}\label{halfeq1b}
\M(G-\{v\})=\M(F_{x-1,y-1}(\alpha \beta U; D)),
\end{equation}
\begin{equation}\label{halfeq1c}
\M(G-\{v\})=\M(F_{x-1,y}(\beta U;D)),
\end{equation}
\begin{equation}\label{halfeq1d}
\M(G-\{v\})=\M(F_{x,y-1}(\alpha U;D)),
\end{equation}
\begin{equation}\label{halfeq1e}
\M(G-\{v\})=\M(F_{x-1,y}(\alpha U;D)),
\end{equation}
\begin{equation}\label{halfeq1f}
\M(G-\{v\})=\M(F_{x,y-1}(\beta U;D)).
\end{equation}
Here, for an index  set $S$, we use the shorthand notations $\alpha S, \beta S,\alpha\beta S$ for the unions $\{\alpha\} \cup S, \{\beta\} \cup S, \{\alpha,\beta \} \cup S$, respectively.
Plugging (\ref{halfeq1a})--(\ref{halfeq1f}) in the equation of Theorem \ref{kuothm}, we get a recurrence for the tiling number on the right-hand side of (\ref{halfeq1}):
\begin{align}\label{recurrence1}
\M(F_{x,y}(U;D);B)&\M(F_{x-1,y-1}(\alpha \beta U; D;B))=\M(F_{x-1,y}(\beta U;D))\M(F_{x,y-1}(\alpha U;D;B))\notag\\
&+\M(F_{x-1,y}(\alpha U;D;B))\M(F_{x,y-1}(\beta U;D;B))
\end{align}

\medskip

To complete the proof we will show that the expression on the right-hand side of (\ref{halfeq1}) also satisfies recurrence \ref{recurrence1}).
Equivalently, we need to verify that
\begin{align}\label{recurrence1b}
&\frac{\Delta(U)\Delta(D)}{\Delta(U')\Delta(D')}\frac{\Hf_2(2u'+2y+1)\Hf_2(2d'+2y+1)}{\Hf_2(2u+2y+1)\Hf_2(2d+2y+1)}\M(F_{x,y}(U';D';B))\notag\\
&\times\frac{\Delta(\alpha \beta U)\Delta(D)}{\Delta(\alpha \beta U')\Delta(D')}\frac{\Hf_2(2u'+2y+3)\Hf_2(2d'+2y-1)}{\Hf_2(2u+2y+3)\Hf_2(2d+2y-1)}\M(F_{x-1,y-1}(\alpha \beta U'; D';B))\notag\\
&=\frac{\Delta(\beta U)\Delta(D)}{\Delta(\beta U')\Delta(D')}\frac{\Hf_2(2u'+2y+3)\Hf_2(2d'+2y+1)}{\Hf_2(2u+2y+3)\Hf_2(2d+2y+1)}\M(F_{x-1,y}(\beta U';D'; B))\notag\\
&\times\frac{\Delta(\alpha U)\Delta(D)}{\Delta(\alpha U')\Delta(D')}\frac{\Hf_2(2u'+2y+1)\Hf_2(2d'+2y-1)}{\Hf_2(2u+2y+1)\Hf_2(2d+2y-1)}\M(F_{x,y-1}(\alpha U';D'; B))\notag\\
&+\frac{\Delta(\alpha U)\Delta(D)}{\Delta(\alpha U')\Delta(D')}\frac{\Hf_2(2u'+2y+3)\Hf_2(2d'+2y+1)}{\Hf_2(2u+2y+3)\Hf_2(2d+2y+1)}\M(F_{x-1,y}(\alpha U';D';B))\notag\\
&\times \frac{\Delta(\beta U)\Delta(D)}{\Delta(\beta U')\Delta(D')}\frac{\Hf_2(2u'+2y+1)\Hf_2(2d'+2y-1)}{\Hf_2(2u+2y+1)\Hf_2(2d+2y-1)}\M(F_{x,y-1}(\beta U';D';B)).
\end{align}
Recall that, for an ordered set $S=\{s_1<s_2<\cdots<s_k\}$, we define $\Delta(S):=\prod_{1\leq i <j \leq k}(s_j^2-s_i^2)$.

Dividing both sides by $\frac{\Delta(D)}{\Delta(D')}\frac{\Hf_2(2u'+2y+1)\Hf_2(2d'+2y+1)}{\Hf_2(2u+2y+1)\Hf_2(2d+2y+1)}$, we simplify the above equation to
\begin{align}\label{recurrence1c}
&\frac{\Delta(U)}{\Delta(U')}\M(F_{x,y}(U';D';B))\frac{\Delta(\alpha \beta U)}{\Delta(\alpha \beta U')}\M(F_{x-1,y-1}(\alpha \beta U'; D';B))\notag\\
&=\frac{\Delta(\beta U)}{\Delta(\beta U')}\M(F_{x-1,y}(\beta U';D'; B))\frac{\Delta(\alpha U)}{\Delta(\alpha U')}\M(F_{x,y-1}(\alpha U';D'; B))\notag\\
&+\frac{\Delta(\alpha U)}{\Delta(\alpha U')}\M(F_{x-1,y}(\alpha U';D';B)) \frac{\Delta(\beta U)}{\Delta(\beta U')}\M(F_{x,y-1}(\beta U';D';B)).
\end{align}
Next, we divide both sides of (\ref{recurrence1c}) by  $\frac{\Delta(U)\Delta(\alpha\beta U)}{\Delta(U')\Delta(\alpha\beta U')}$ and get
\begin{align}\label{recurrence1d}
\M(F_{x,y}(U';D';B))&\M(F_{x-1,y-1}(\alpha \beta U'; D';B))\notag\\
&=\frac{\Delta(\beta U)\Delta(\alpha U)}{\Delta(U)\Delta(\alpha\beta U)}\frac{\Delta(U')\Delta(\alpha\beta U')}{\Delta(\beta U')\Delta(\alpha U')}\M(F_{x-1,y}(\beta U';D'; B))\M(F_{x,y-1}(\alpha U';D'; B))\notag\\
&+\frac{\Delta(\alpha U)\Delta(\beta U)}{\Delta(U)\Delta(\alpha\beta U)}\frac{\Delta(U')\Delta(\alpha\beta U')}{\Delta(\alpha U')\Delta(\beta U')}\M(F_{x-1,y}(\alpha U';D';B)) \M(F_{x,y-1}(\beta U';D';B)).
\end{align}
We claim that
\begin{claim}
\begin{equation}\label{ratio1}
\frac{\Delta(\beta U)\Delta(\alpha U)}{\Delta(U)\Delta(\alpha\beta U)}\frac{\Delta(U')\Delta(\alpha\beta U')}{\Delta(\beta U')\Delta(\alpha U')}=1
\end{equation}\
and
\begin{equation}\label{ratio2}
\frac{\Delta(\alpha U)\Delta(\beta U)}{\Delta(U)\Delta(\alpha\beta U)}\frac{\Delta(U')\Delta(\alpha\beta U')}{\Delta(\alpha U')\Delta(\beta U')}=1.
\end{equation}
\end{claim}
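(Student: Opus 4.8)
The plan is to reduce both displayed identities to a single computation that is manifestly independent of $U$ (and of $U'$). The key preliminary observation I would record first is an elementary \emph{one-element insertion rule} for the operator $\Delta$: if $a$ is a positive integer with $a\notin S$, then
\[
\Delta(\{a\}\cup S)=\Delta(S)\prod_{s\in S}|a^2-s^2|.
\]
This is immediate from the definition $\Delta(S)=\prod_{1\le i<j\le k}(s_j^2-s_i^2)$: since all entries are positive integers, sorting a finite set of positive integers by value coincides with sorting them by their squares, so $\Delta(S)$ is simply the Vandermonde product $\prod_{i<j}(\sigma_j-\sigma_i)$ in the increasingly sorted squares $\sigma_i=s_i^2$. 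Inserting one new value $a^2$ leaves the pairs not involving $a$ untouched (that product is $\Delta(S)$) and creates one new factor $|a^2-s^2|$ for each $s\in S$, irrespective of where $a^2$ falls in the order; in particular every $\Delta(\cdot)$ occurring here is a positive number, so no sign bookkeeping is needed, and the rule also holds verbatim in the degenerate cases $|S|\le 1$ where $\Delta$ is an empty product equal to $1$.

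Next I would apply this rule twice. Writing $P_\alpha(S):=\prod_{s\in S}|\alpha^2-s^2|$ and $P_\beta(S):=\prod_{s\in S}|\beta^2-s^2|$, and using that $\alpha,\beta\notin U$ — indeed $\alpha,\beta\in(U\cup D\cup B)^c$, and since $U\cup D=U'\cup D'$ we also have $\alpha,\beta\notin U'$ — the insertion rule gives $\Delta(\alpha U)=\Delta(U)P_\alpha(U)$, $\Delta(\beta U)=\Delta(U)P_\beta(U)$, and, applying it once more with $a=\alpha$ to the set $\beta U$,
\[
\Delta(\alpha\beta U)=\Delta(\beta U)\,|\alpha^2-\beta^2|\,P_\alpha(U)=\Delta(U)\,|\alpha^2-\beta^2|\,P_\alpha(U)P_\beta(U).
\]
Substituting into the first factor of (\ref{ratio2}), all of $\Delta(U)$, $P_\alpha(U)$, $P_\beta(U)$ cancel and one is left with $\dfrac{\Delta(\alpha U)\Delta(\beta U)}{\Delta(U)\Delta(\alpha\beta U)}=\dfrac{1}{|\alpha^2-\beta^2|}$, which depends only on $\alpha,\beta$. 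The identical computation with $U'$ in place of $U$ yields the same value $1/|\alpha^2-\beta^2|$, so the product on the left of (\ref{ratio2}) equals $1$. Finally, since the two scalar factors on the left of (\ref{ratio1}) are literally the same as those of (\ref{ratio2}) up to reordering a product, (\ref{ratio1}) follows as well.

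There is essentially no genuine obstacle in this step; the only points that require a moment's care are (i) verifying that $\alpha$ and $\beta$ lie outside \emph{both} $U$ and $U'$ — so that each $|\alpha^2-s^2|$, $|\beta^2-s^2|$ is nonzero and the insertion rule is applicable — which follows from $\alpha,\beta\in(U\cup D\cup B)^c$ together with $U\cup D=U'\cup D'$, and (ii) confirming the degenerate cases $u=0$ or $u'=0$, where the relevant $\Delta$'s are empty products; in these cases both the insertion rule and the final identity hold unchanged. With the Claim established, identities (\ref{ratio1}) and (\ref{ratio2}) convert the recurrence (\ref{recurrence1d}) for the primed $F$-type regions into exactly the Kuo-condensation recurrence (\ref{recurrence1}), which together with the induction hypothesis closes the induction for (\ref{halfeq1}).
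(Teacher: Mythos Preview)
Your proof is correct and follows essentially the same approach as the paper: both arguments cancel common factors in the $\Delta$-products to show that each of the two fractions depends only on $\alpha$ and $\beta$, hence they are mutual reciprocals. Your version is in fact more careful than the paper's sketch: you make the one-element insertion rule explicit, you correctly obtain the intermediate value $1/|\alpha^2-\beta^2|$ (the paper writes $1/(\beta-\alpha)$, which is a slip), and you verify that $\alpha,\beta\notin U'$ via $U\cup D=U'\cup D'$ as well as the degenerate cases $u=0$ or $u'=0$.
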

\begin{proof}[Proof of the claim]
Let us prove first (\ref{ratio1}). Simplifying the first fraction on the left-hand side by canceling out the common terms in the $\Delta$-products, we get
\begin{align}
\frac{\Delta(\beta U)\Delta(\alpha U)}{\Delta(U)\Delta(\alpha\beta U)}&=\frac{\prod_{i=1}^{i}|\beta-s_i|}{|\beta-\alpha|\prod_{i=1}^{u}|\beta-s_i|}\notag\\
&=\frac{1}{\beta-\alpha}.
\end{align}
Similarly, the second fraction can be simplified to just $(\beta-\alpha)$, and (\ref{ratio1}) follows.

By interchanging the roles of $\alpha$ and $\beta$ in (\ref{ratio1}), one obtain (\ref{ratio2}). This finishes the proof of the claim.
\end{proof}

By the claim, (\ref{recurrence1c}) now becomes
\begin{align}\label{recurrence1e}
\M(F_{x,y}(U';D';B))\M(F_{x-1,y-1}&(\alpha \beta U'; D';B))=\M(F_{x-1,y}(\beta U';D'; B))\M(F_{x,y-1}(\alpha U';D'; B))\notag\\
&+\M(F_{x-1,y}(\alpha U';D';B)) \M(F_{x,y-1}(\beta U';D';B)).
\end{align}
However, this follows directly from the application of recurrence (\ref{recurrence1}) to the region $F_{x,y}(U';D';B)$. This finishes the proof of (\ref{halfeq1}).

\medskip

The proof of (\ref{halfeq2}) is similar to that of (\ref{halfeq1}). We also prove by induction on $x+y$, with the base cases are still the situations when $x=b$ and $y=0$.

If $y=0$, we get from Region-splitting Lemma \ref{RS} by diving the regions along the horizontal axis $l$ into two quartered hexagons:
\[\M(\overline{F}_{x,0}(U;D;B))=\M(L_{2u-1,x+n-u}(U))\M(L_{2u-1,x+n-d}(D))\]
and
\[\M(\overline{F}_{x,0}(U';D';B))=\M(L_{2u'-1,x+n-u'}(U'))\M(L_{2u'-1,x+n-d'}(D')).\]  Then (\ref{halfeq2}) follows from Lemma \ref{QAR}.

For the case $x=b$, we also have from Region-splitting Lemma \ref{RS}:
\[\M(\overline{F}_{b,y}(U;D;B))=\M(L_{2y+2u-1,b+n-u}((B\cup U\cup D)^c \cup U))\M(L_{2y+2u-1,b+n-d}((B\cup U\cup D)^c \cup D)\]
and
\[\M(\overline{F}_{b,y}(U';D';B))=\M(L_{2y+2u-1,b+n-u}((B\cup U'\cup D')^c \cup U'))\M(L_{2y+2u-1,b+n-d}((B\cup U'\cup D')^c \cup D')).\]
Again, (\ref{halfeq2}) follows from Lemma \ref{QAR}.

The induction step is completely analogous to that in the proof of (\ref{halfeq1}), we now use Kuo condensation as in Figure \ref{halvedhexkuo1}(b) to get the recurrence for the left-hand side of (\ref{halfeq2}):
\begin{align}\label{recurrence2}
\M&(\overline{F}_{x,y}(U;D;B))\M(\overline{F}_{x-1,y-1}(\alpha \beta U; D;B))=\M(\overline{F}_{x-1,y}(\beta U;D;B))\M(\overline{F}_{x,y-1}(\alpha U;D;B))\notag\\
&+\M(\overline{F}_{x-1,y}(\alpha U;D;B))\M(\overline{F}_{x,y-1}(\beta U;D;B)).
\end{align}
Working similarly to the proof of (\ref{halfeq1}) above, one can verify that the expression on the right-hand side of (\ref{halfeq2}) also satisfies recurrence (\ref{recurrence2}). Then (\ref{halfeq2}) follows by the induction principle, and this finishes the proof for (\ref{halfeq2}).
\end{proof}

The proof for Theorem \ref{halffactor2} is essentially the same as that of Theorem \ref{halffactor1} above. We apply Kuo condensation exactly the same as in Figure \ref{halvedhexkuo1} to obtain same recurrences as that in the proof of Theorem \ref{halffactor1}. We note that, even though, lozenges of the $W$- and $\overline{W}$-type regions may carry weights, our forced lozenges all have weight 1. This means that our arguments in the proof of Theorem \ref{halffactor1} still work well in proving Theorem \ref{halffactor2}. We omit this proof here.

\medskip

We now are ready to proof Theorem \ref{reflectfactor}.

\begin{figure}\centering
\includegraphics[width=13cm]{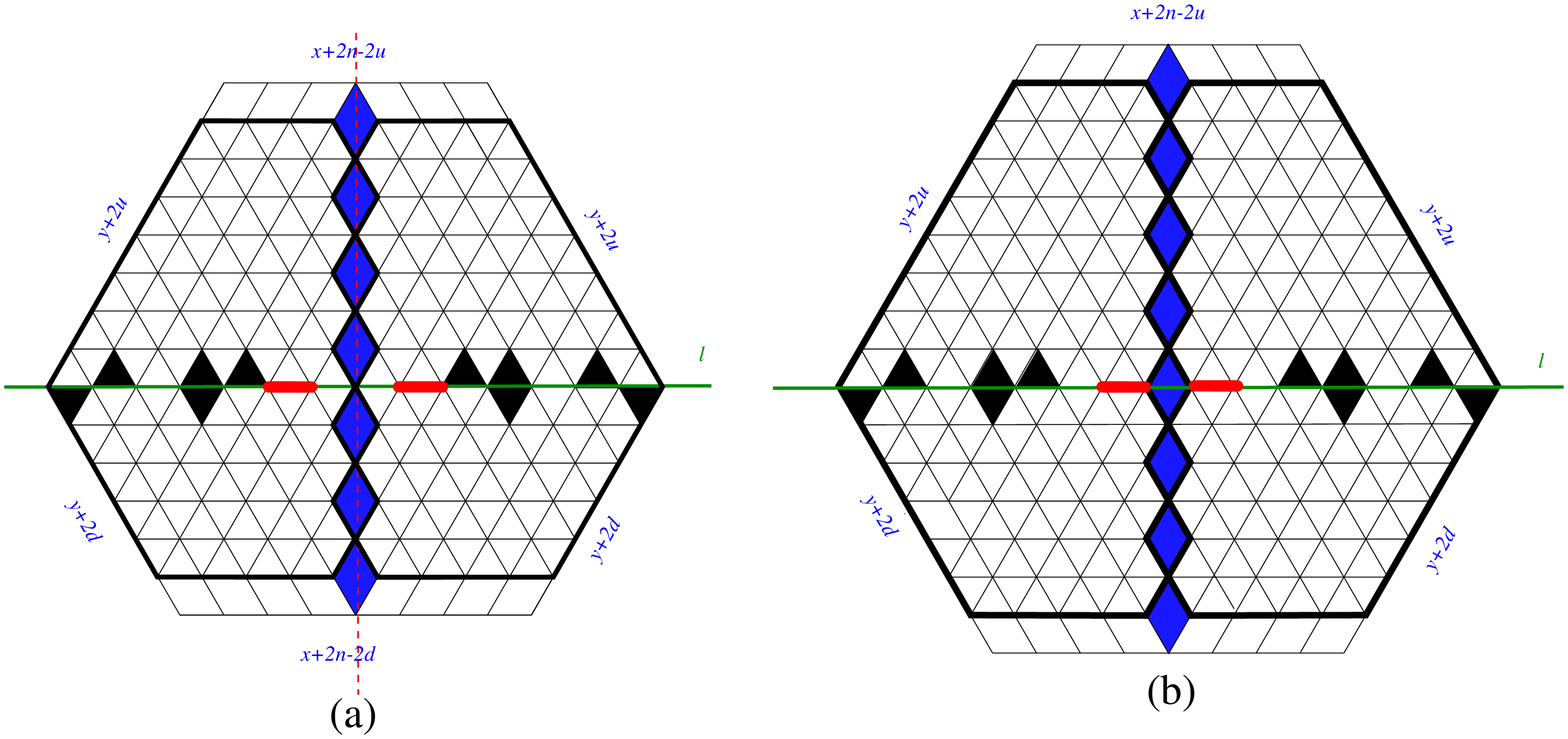}
\caption{Illustrating the proof of Theorem \ref{reflectfactor} in the case (a) $y$ is even and (b) $y$ is odd.}\label{Reflectdent2}
\end{figure}

\begin{proof}[Proof of Theorem \ref{reflectfactor}]
One readily see that, any reflectively symmetric tiling of the region $RS_{x,y}(U;D;B)$ must contain all vertical lozenges along the vertical symmetry axis. The removal of these lozenges divides the region into two congruent regions that are images of each other through the reflective symmetry (see Figure \ref{Reflectdent2}). This means that reflectively symmetric tilings of $RS_{x,y}(U;D;B)$ are in bijection with the tilings of each of these two regions. Remove forced lozenges from the top and bottom of each of these regions, we get back exactly the halved hexagon $\overline{F}_{x,\frac{y}{2}}(U;D;B)$ when $y$ is even and the halved hexagon $F_{x,\frac{y-1}{2}}(U;D;B)$ when $y$ is odd. Then our theorem follows from Theorem \ref{halffactor1}.
\end{proof}

\section{Concluding Remarks}
One readily sees that the number of tilings of $L_{2k,n}(a_1,\dots,a_k)$ can be written in terms of \emph{symplectic characters} (irreducible characters of \emph{symplectic group} $Sp_{2n}(\mathbb{C})$: $sp_{\lambda}(1,1,\dots,1)$, where $\lambda$ is the partition $(a_k-k, a_{k-1}-k+1,\dots,a_1-1)$ (see, e.g. \cite{Fischer}). One can also write the number of tilings of $F_{x,y}(U;D;B)$ as the sum
\[\M(F_{x,y}(U;D;B))=\sum_{|S|=y}sp_{\lambda(U\cup S)}(1^{u+y})sp_{\lambda(D\cup S)}(1^{d+y}),\]
where the sum is taken over all $y$-subsets of the complement of $U\cup D\cup B$. Indeed, each lozenge tiling of $F_{x,y}(U;D;B)$ contain exactly $y$ vertical lozenges at the positions in the complement of $U\cup D\cup B$ along the horizontal axis. Removing these $y$ vertical lozenges, we partition the tiling into tilings of two disjoint quartered hexagons of type $L_{2k,n}$. This way, the specialization of identity (\ref{halfeq1}), when $|U|=|U'|$ and $|D|=|D'|$, can be rewritten as:
\begin{equation}\label{halfeq1refine}
\frac{\sum_{|S|=y}sp_{\lambda(U\cup S)}(1^{u+y})sp_{\lambda(D\cup S)}(1^{d+y})}{\sum_{|S|=y}sp_{\lambda(U'\cup S)}(1^{u+y})sp_{\lambda(D'\cup S)}(1^{d+y})}=\frac{sp_{\lambda(U)}(1^{u})sp_{\lambda(D)}(1^{d})}{sp_{\lambda(U')}(1^{u})sp_{\lambda(D')}(1^{d})}.
\end{equation}
It would be interesting to see if there exists an identity of symplectic characters behind this.

We consider the special case of Theorem \ref{halffactor1} when $|U|=|U'|$ and $|D|=|D'|$. The identity \ref{halfeq1} can be simplified as
\begin{align}
\frac{\M(F_{x,y}(U;D;B))}{\M(F_{x,y}(U;D;B))}=\frac{\displaystyle\prod_{1\leq i <j\leq u}(s^2_j-s^2_i)\displaystyle\prod_{1\leq i <j\leq d}(t^2_j-t^2_i)}{\displaystyle\prod_{1\leq i <j\leq u}(s'^2_j-s'^2_i)\displaystyle\prod_{1\leq i <j\leq d}(t'^2_j-t'^2_i)}.
\end{align}
Comparing with Lemma \ref{QAR}, we can rewrite the above identity in terms of tilings numbers of halved hexagons and quartered hexagons
\begin{align}\label{geoeq1}
\frac{\M(F_{x,y}(U;D;B))}{\M(F_{x,y}(U;D;B))}=\frac{\M(L_{2u,x+y+n-u}(U))\M(L_{2d,x+y+n-d}(D))}{\M(L_{2u,x+y+n-u}(U'))\M(L_{2d,x+y+n-d}(D'))}.
\end{align}
It is interesting to find a combinatorial explanation for this geometric interpretation. 

Moreover, by Region-Splitting Lemma \ref{RS}, we can show that
\begin{equation}
\M(F_{x+y,0}(U;D;B))=\M(L_{2u,x+y+n-u}(U)\M(L_{2d,x+y+n-d}(D))
\end{equation}
and
\begin{equation}
\M(F_{x+y,0}(U';D';B))=\M(L_{2u,x+y+n-u}(U')\M(L_{2d,x+y+n-d}(D')).
\end{equation}
Then (\ref{geoeq1}) can be written as
\begin{align}\label{geoeq2}
\M(F_{x,y}(U;D;B))\M(F_{x+y,0}(U';D';B))=\M(F_{x,y}(U;D;B))\M(F_{x+y,0}(U;D;B)).
\end{align}
The both sides of the above identity count pairs of tilings of certain halved hexagons. It would be interesting to find a bijective proof for identity (\ref{geoeq2}).

\section*{Acknowledgement}
The author would like to thank Ranjan Rohatgi for fruitful discussions. Part of the main result  in this paper was initiated from the joint project between the author and him on shuffling theorems of doubly--dented hexagons in \cite{shuffling}.

\end{document}